\title[New characterizations of operator monotone functions]{New characterizations of operator monotone functions}
\author[T. H. Dinh]{Trung Hoa Dinh}
\address{Division of Computational Mathematics and Engineering, Institute for Computational Science, Ton Duc Thang University, Ho Chi Minh City, Vietnam; \\ Faculty of Civil Engineering, Ton Duc Thang University, Ho Chi Minh City, Vietnam\\
Department of Mathematics and Statistics \\University of North Florida\\ Jacksonville, FL 32224, USA}
\email{dinhtrunghoa@tdt.edu.vn}
\author[R. Dumitru]{Raluca Dumitru}
\address{Department of Mathematics and Statistics \\ University of North Florida  \\ Jacksonville, FL 32224}
\email{raluca.dumitru@unf.edu}
\author[J. A. Franco]{Jose A. Franco}
\address{Department of Mathematics and Statistics \\ University of North Florida  \\ Jacksonville, FL 32224}
\email{jose.franco@unf.edu}
\subjclass[2010]{47A63, 47A64, 47A56, 46E05, 15B48}
\keywords{Kubo-Ando means, operator monotone functions, symmetric means, self-adjoint means, lattice of functions.}
\theoremstyle{plain}
\newtheorem{theorem}{Theorem}
\newtheorem{lemma}[theorem]{Lemma}
\newtheorem{remark}[theorem]{Remark}
\newtheorem{proposition}[theorem]{Proposition}
\theoremstyle{definition}
\newtheorem{definition}{Definition}[section]
\begin{document}
\begin{abstract}
If $\sigma$ is a symmetric mean and $f$ is an operator monotone function on $[0, \infty)$, then 
$$f(2(A^{-1}+B^{-1})^{-1})\le f(A\sigma B)\le f((A+B)/2).$$
Conversely, Ando and Hiai showed that if $f$ is  a function that satisfies either one of these inequalities for all positive operators $A$ and $B$ and a symmetric mean different than the arithmetic and the harmonic mean, then the function is operator monotone. 

In this paper, we show that the arithmetic and the harmonic means can be replaced by the geometric mean to obtain similar characterizations. Moreover, we give characterizations of operator monotone functions using self-adjoint means and general means subject to a constraint due to Kubo and Ando.
\end{abstract}

\maketitle

\section{Introduction}
It is well-known that if $\sigma$ is a symmetric mean of operators,  {\it i.e.},  $A\sigma B =B\sigma A$, the following inequality is satisfied for any positive operators $A$ and $B$,
\begin{equation}\label{general}
A ! B \le A\sigma B \le A \nabla B,
\end{equation}
where $A ! B=2(A^{-1}+B^{-1})^{-1}$ is the harmonic mean of $A$ and $B$, and $A \nabla B=(A+B)/2$ is the arithmetic mean of $A$ and $B$. Obviously, if $f:[0,\infty)\to [0,\infty)$ is operator monotone, we have
\begin{equation}\label{general1}
f(A ! B) \le f(A\sigma B) \le f(A \nabla B).
\end{equation}
Interestingly, if a continuous function $f$ satisfies either of the inequalities for some scalar mean $M$,
\begin{equation}\label{general2}
f(a ! b) \le f(M(a, b)) \le f(a \nabla b).
\end{equation}
for positive numbers $a$ and $b$, then $f$ is monotone increasing. Matrix generalizations of this observation for Kubo-Ando means were discussed by Hiai and Ando in \cite[Proposition 4.1]{hiai-ando}. Namely, they showed that a continuous function $f$ on $(0, \infty)$ is operator monotone if and only if one of the following conditions holds:
\begin{itemize}
    \item[(A)] $f(A\nabla B) \ge f(A\sigma B)$ for all positive definite matrices $A, B$ and for some symmetric operator mean $\sigma \neq \nabla$;
    \item[(B)] $f(A! B) \le f(A\sigma B)$ for all positive definite matrices $A, B$ and for some symmetric operator mean $\sigma \neq !$.
\end{itemize}
 
Due to the importance of the geometric mean,
$$A\#B:= A^{1/2}(A^{-1/2}BA^{-1/2})^{1/2}A^{1/2},$$
as the mid-point of the geodesic,
$$A\#_tB:= A^{1/2}(A^{-1/2}BA^{-1/2})^{t}A^{1/2},$$
connecting two matrices $A$ and $B$ in the Riemannian manifold of positive matrices, it is natural to consider a similar characterization using this mid-point. This importance becomes more evident when one considers that $\#$ is not only symmetric but also self-adjoint {\it i.e.} $(A\# B)^{-1}=A^{-1}\# B^{-1}$, so it seems as a natural candidate to extend this characterization to other classes of means.  In this article we consider the following question:

{\bf Question:} Is the operator monotonicity of a continuous function $f$ on $(0, \infty)$ equivalent to one of the following conditions:
\begin{itemize}
\item[(C)] \label{Q3} $f(A\sharp B) \le f(A\sigma B)$ for all positive definite matrices $A, B$ and for some symmetric operator mean $\sigma > \sharp$;
\item[(D)] \label{Q4} $f(A\sigma B) \le f(A\tau B)$ for all positive matrices $A, B$ and for different operator means $\sigma$ and $\tau$ such that $\sigma < \tau$. 
\end{itemize}

In this article we answer (C) positively for any symmetric operator mean $\sigma\ne \#$ (see Proposition \ref{Prop14}). Regarding (D), we answer the question positively for two cases. The first case is when $\sigma=\#$ and $\tau$ is self-adjoint and satisfies a special order relation due to Hansen and Audenaert (see Theorem \ref{Prop11}). The second case is when $\sigma =\#$ and $\tau$ is any mean that satisfies a condition introduced by Kubo and Ando (see Theorem \ref{Prop12}).

The paper is organized as follows. In Section \ref{Sect2} we motivate the main results of the article by analyzing concrete means in the scalar case. In Section \ref{section 3} we establish matrix generalizations of the results of Section \ref{Sect2}. That is, we obtain characterizations of operator monotone functions by inequalities involving  the geometric mean and general operator means.  In Subsection \ref{Hansen}, we use a characterization of symmetric means due to Audenaert, Cai, and Hansen \cite{Audenaert} to give a partial answer to (C). This result is further generalized in the last subsection of this section where in Theorem \ref{Prop14} the question is answered definitively. In Subsection \ref{SelfAdjointSection}, we use a different characterization due to Hansen \cite{Hansen} for self-adjoint means to answer (D) partially for self-adjoint means. Subsection \ref{KAConditionSect} uses a condition on means introduced by Kubo and Ando to answer (D) on the class of means that satisfy this condition. The last section dedicates to characterizations of operator monotonicity by opeartor means interpolating between the geometric and the arithmetic means such as the matrix Heron means, the Heinz means. In particular, the result in Theorem \ref{theorem 5} answers question (D) positively in the case when $\sigma$ is the Heinz mean and $\tau$ is the Heron mean.

\section{Scalar means and characterization of monotone functions}\label{Sect2}

For two non-negative numbers $x$ and $y$ let us denote by $$G_s(x,y)=\frac{x^s y^{1-s}+x^{1-s}y^s}{2}$$ the Heinz means and by 
$$H_s(x,y)=s\frac{x+y}{2} + (1-s)x^{1/2}y^{1/2}$$ the Heron means. 

The family of Heron means and Heinz means are clearly interpolations between the arithmetic and the geometric means. In \cite{bhatia3}, Bhatia obtained a relation between the Heinz mean and the Heron mean which states that for $t \in [0, 1]$,  
\begin{equation}\label{bh1}
G_t(a, b) \le H_{(2t-1)^2}(a, b). 
\end{equation}
Therefore, for any $t\in [0, 1]$, we have 
\begin{equation}\label{general11}
\sqrt{ab} \le G_t(a, b) \le H_{(2t-1)^2} \le H_{|2t-1|}  \le \frac{a+b}{2}. 
\end{equation}

As mentioned in the introduction,  if a continuous function $f$ satisfies \eqref{general2}, the function is increasingly monotone. To motivate the answer to question (C), we prove a new characterization of monotonicity (as a real function) based on Inequality \eqref{general11}. 

\begin{theorem}\label{ScalarCase}
Let $M$ be some symmetric scalar mean on $\mathbb{R}^+$ which is strictly greater than to the geometric mean. And let 
\begin{equation}\label{geom}
f(\sqrt{ab}) \le f(M(a, b)) 
\end{equation}
whenever non-negative numbers $a$ and $b$. Then the function $f$ is increasingly monotone on $\mathbb{R}^+$.
\end{theorem}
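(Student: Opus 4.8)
The plan is to use the homogeneity of $M$ to reduce the two–variable hypothesis \eqref{geom} to a one–variable statement, and then to realize an arbitrary pair $x<y$ as $x=\sqrt{ab}$ and $y=M(a,b)$ for a suitable choice of $a,b$. Since $M$ is a mean it is homogeneous of degree one, so putting $a=xe^{t}$ and $b=xe^{-t}$ keeps the geometric mean fixed, $\sqrt{ab}=x$, while
\[
M(xe^{t},xe^{-t})=x\,M(e^{t},e^{-t})=x\,\phi(t),\qquad \phi(t):=M(e^{t},e^{-t}).
\]
First I would record the properties of $\phi$ that I need: it is continuous, $\phi(0)=M(1,1)=1$, and $\phi(t)>1$ for every $t\neq 0$ because $M$ is \emph{strictly} larger than the geometric mean. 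Writing $G:=\sup_{t\ge 0}\phi(t)$, these facts give $G>1$ (with $G=+\infty$ allowed).

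The core step is then local. Fixing $x>0$ and any $y$ with $x<y<xG$, the ratio $y/x$ lies in $(1,G)$, so some $t_1$ satisfies $\phi(t_1)>y/x$; since $\phi(0)=1<y/x<\phi(t_1)$, the intermediate value theorem yields a $t$ with $\phi(t)=y/x$. For $a=xe^{t}$ and $b=xe^{-t}$ we then have $\sqrt{ab}=x$ and $M(a,b)=y$, so \eqref{geom} gives $f(x)\le f(y)$. Thus $f(x)\le f(y)$ for every $y\in[x,xG)$. To upgrade this to full monotonicity I would chain such comparisons: because homogeneity makes $G$ the \emph{same} constant for all $x$, given arbitrary $0<x<y$ I can pick $n$ large enough that $(y/x)^{1/n}<G$ and set $x_i=x(y/x)^{i/n}$; then each $x_{i+1}$ lies in $[x_i,x_iG)$, so $f(x_i)\le f(x_{i+1})$, and concatenating over $i=0,\dots,n-1$ gives $f(x)\le f(y)$. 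Hence $f$ is increasingly monotone.

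The hard part is precisely the possibility that $G$ is finite. A symmetric mean lying strictly above the geometric mean need not grow without bound when $\sqrt{ab}$ is held fixed and $a/b\to\infty$; for instance $M(a,b)=\sqrt{ab}\,\bigl(1+\varepsilon\,(1-\operatorname{sech}(\log(a/b)))\bigr)$ is such a mean for small $\varepsilon$, and its profile $\phi$ is bounded. Consequently a single use of \eqref{geom} only compares $f$ across the bounded multiplicative window $[x,xG)$, and one cannot expect to pass directly from $x$ to a far-away $y$. The role of homogeneity is exactly to make this window uniform in $x$, which is what lets the chaining bridge from $x$ to an arbitrary $y>x$; without that uniformity the argument would stall.
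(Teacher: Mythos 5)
Your proposal is correct and follows essentially the same route as the paper: fix the geometric mean at $x$, parametrize the pair so that $M(a,b)=x\varphi(\cdot)$ for a continuous profile function with value $1$ at the symmetric point, use the intermediate value theorem to realize any ratio in $[1,G)$, and then chain across a geometric subdivision to reach arbitrary $y>x$. Your explicit remark that the window $[x,xG)$ may be bounded (with an example) and that homogeneity makes it uniform in $x$ is a nice clarification of why the chaining step is genuinely needed, but it is the same argument the paper gives with $\gamma=\lim_{t\to\infty}t^{-1}h(t^2)$ in place of your $G$.
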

\begin{proof}
To prove the theorem, we have to show that for any $0 < x \le y$ there exist $a, b>0$ such that $x= \sqrt{ab}$ and $y= M(a,b) = ah(b/a)$, where $h(t) = M(1, t)$ is the representing function of $M$.
Or, equivalently, for any $y_0\ge 1$ there exist $a, b>0$ such that $1= \sqrt{ab}$ and $y_0= M(a,b) = a^{-1}h(a^2)$ (because of the first identity). The function $\varphi(t)= t^{-1}h(t^2)$ is surjective from $(0, \infty)$ onto $[1, \gamma)$, where $\gamma=\lim_{t \to \infty}(t)>1$.  Therefore, for any $1 \le y_0 < \gamma $ there exists $a>0$ such that $y_0=a^{-1}h(a^2)$. Consequently, if $0 < x \le y \le \gamma x$, the previous argument implies that 
$$f(x)\le f(y).$$

If $y> \gamma x$, equivalently if $y_0>\gamma$, let $\gamma_0\in (1,\gamma)$ and consider the sequence $\{\gamma_0^n\}_{n\in \mathbb{N}}$. Since $\gamma_0^n \to \infty$ as $n\to \infty$, there exists $k\in\mathbb{N}$ such that $$0<x < \gamma_0 x\le ...\le \gamma_0^k x \le y < \gamma_0^{k+1} x.$$
Hence, the previous argument implies that:
$$f(x) < f(\gamma_0 x) \le ...\le f(\gamma_0^k x) \le f(y).$$
Therefore, $f$ is increasingly monotone on $\mathbb{R}^+$.
\end{proof}

Now, we show that the inequality between the Heinz mean and the Heron mean of scalars also characterizes monotonicity.
\begin{theorem}\label{Prop4} A continuous function $f$ on $[0, \infty)$ is monotone increasing if and only if for any pair of positive numbers $x, y$ and $s\in (0, 1/2)\cup (1/2,1)$, 
\begin{equation}\label{CharacterizationBhatia}
 f\left(\frac{x^s y ^{1-s}+x^{1-s}y^s}{2}\right) \le f\left(\alpha(s)^2\frac{x+y}{2} + (1-\alpha(s)^2)\sqrt{xy} \right), 
\end{equation}
where $\alpha(s)=2s-1$. 
\end{theorem}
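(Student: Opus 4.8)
The first step is to recognize that, since $\alpha(s)^2=(2s-1)^2$, the right-hand side of \eqref{CharacterizationBhatia} is precisely the Heron mean $H_{(2s-1)^2}(x,y)$, so the inequality to be characterized reads $f(G_s(x,y))\le f(H_{(2s-1)^2}(x,y))$. With this identification the forward direction is immediate from Bhatia's inequality \eqref{bh1}: if $f$ is monotone increasing, then $G_s(x,y)\le H_{(2s-1)^2}(x,y)$ gives $f(G_s(x,y))\le f(H_{(2s-1)^2}(x,y))$ for every admissible $s$ and all $x,y>0$.

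For the converse the plan is to show that, for a single fixed $s\in(0,1/2)\cup(1/2,1)$, the pair $\big(G_s(x,y),\,H_{(2s-1)^2}(x,y)\big)$ ranges over all pairs $(p,q)$ with $0<p\le q$ as $x,y$ vary; the hypothesis will then force $f(p)\le f(q)$ for all such $p,q$. I should note that Theorem \ref{ScalarCase} does not apply directly, because the left-hand argument here is the Heinz mean $G_s$ rather than the geometric mean (indeed $G_s=\sqrt{xy}$ only at the excluded value $s=1/2$), so a direct argument is needed. By homogeneity of both means it suffices to analyze the single-variable ratio $\rho(r)=H_{(2s-1)^2}(1,r)/G_s(1,r)$ for $r\ge 1$: given a target ratio $k=q/p\ge 1$, if I can solve $\rho(r)=k$, then setting $x=p/G_s(1,r)$ and $y=rx$ produces $G_s(x,y)=p$ and $H_{(2s-1)^2}(x,y)=q$, and the hypothesis yields $f(p)\le f(q)$.

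Thus the crux reduces to verifying that $\rho$ is surjective onto $[1,\infty)$. Continuity of $\rho$ and the value $\rho(1)=1$ are clear, and $\rho\ge 1$ throughout by \eqref{bh1}; the essential point, which I expect to be the main obstacle, is the asymptotic behavior as $r\to\infty$. Here I would compare growth rates: the Heron mean $H_{(2s-1)^2}(1,r)$ grows linearly in $r$ with positive coefficient $(2s-1)^2/2$ (using $s\ne 1/2$), whereas $G_s(1,r)\sim r^{\max(s,1-s)}/2$ grows only sublinearly, since $\max(s,1-s)<1$ for $s\in(0,1/2)\cup(1/2,1)$. Consequently $\rho(r)\to\infty$, and the intermediate value theorem gives surjectivity onto $[1,\infty)$.

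Assembling these pieces yields $f(p)\le f(q)$ for every $0<p\le q$, and continuity of $f$ then extends monotonicity to the endpoint $0$, completing the converse. It is worth observing that the Heinz--Heron pair is in fact better behaved than the situation in Theorem \ref{ScalarCase}, where the analogous ratio was bounded above by a finite $\gamma$ and an iteration along a geometric sequence was required; here the unbounded ratio covers all of $[1,\infty)$ in a single step.
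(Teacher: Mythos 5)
Your proposal is correct, and its overall strategy coincides with the paper's: both reduce the converse, via homogeneity, to showing that the ratio of the Heron mean to the Heinz mean of $(1,r)$ attains every value in $[1,\infty)$, and then scale to produce the required pair $(x,y)$. The difference lies in how that range statement is established. The paper substitutes $e^{2c}=y/x$, rewrites the (reciprocal) ratio as $f_\alpha(c)=\cosh(\alpha c)/\bigl(\alpha^2\cosh(c)+(1-\alpha^2)\bigr)$, and proves it is \emph{bijective} from $[0,\infty)$ onto $(0,1]$ by an explicit two-step derivative computation (showing $g_\alpha\le 0$ via $g_\alpha(0)=0$ and $g_\alpha'\le 0$). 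You instead prove only \emph{surjectivity} of $\rho(r)=H_{(2s-1)^2}(1,r)/G_s(1,r)$ onto $[1,\infty)$, by the elementary observation that the Heron mean grows linearly in $r$ with coefficient $(2s-1)^2/2>0$ while $G_s(1,r)\sim r^{\max(s,1-s)}/2$ is sublinear, so $\rho(r)\to\infty$ and the intermediate value theorem finishes the job. This is a genuine simplification, and it is in the spirit of the paper's own later remark (before Lemma \ref{lemma13}) that surjectivity, not bijectivity, is all that such arguments require; your closing observation that no iteration along a geometric sequence is needed (unlike Theorem \ref{ScalarCase}) is also accurate. The only thing the paper's heavier computation buys is uniqueness of the solution $(x,y)$, i.e.\ bijectivity rather than mere surjectivity, and that stronger fact is quoted again in the proof of Theorem \ref{theorem 5}; for Theorem \ref{Prop4} itself your argument is complete.
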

\begin{proof}
The implication follows from \eqref{general11} and monotonicity, so we only need to show the converse. Given two positive numbers $a\le b$, it suffices to show that there exist positive numbers $x$ and $y$ such that
\begin{equation}\label{Prop4ab}
a =\frac{x^s y ^{1-s}+x^{1-s}y^s}{2}, \quad b =\alpha(s)^2\frac{x+y}{2} + (1-\alpha(s)^2)\sqrt{xy},
\end{equation}
as this would imply $f(a)\le f(b)$ showing the desired monotonicity.
If such $x$ and $y$ exist, from (\ref{Prop4ab}) we would have 
\begin{align*}
    \frac{a}{b}& =\frac{x^s y ^{1-s}+x^{1-s}y^s}{\alpha(s)^2(x+y) + 2(1-\alpha(s)^2)\sqrt{xy}} \\
    & = \frac{(y/x)^{\alpha(s)/2}+(y/x)^{-\alpha(s)/2}}{\alpha(s)^2((y/x)^{1/2}+(y/x)^{-1/2}) + 2(1-\alpha(s)^2)} \\
    & =\frac{\cosh(\alpha(s)c)}{\alpha(s)^2\cosh(c) + (1-\alpha(s)^2)},
\end{align*}
where $e^{2c}= y/x.$ 
We define 
$$f_\alpha(c)= \frac{\cosh(\alpha c)}{\alpha^2\cosh(c) + (1-\alpha^2)}$$
and show that $f_\alpha:[0,\infty)\to (0,1]$ is bijective.
Indeed, notice that  
$$f_\alpha(0)=1 \quad \hbox{and} \quad \lim_{c\to \infty}f_\alpha(c)=0.$$
Continuity and the Intermediate Value Theorem imply that the function $f_\alpha:[0,\infty)\to (0,1]$ is surjective. Moreover, we can show that the function $f_\alpha:[0,\infty)\to (0,1]$ is also injective. To do this, it is enough to show that the function is monotonic on $[0,\infty)$. So, note that 
$$\frac{d}{d c} f_\alpha(c)\le 0$$
if and only if,
$$g_\alpha(c):=\alpha \sinh(\alpha c)(\alpha^2\cosh(c) + (1-\alpha^2))-\alpha^2\sinh(c)\cosh(\alpha c)\le 0.$$
Since, $g_\alpha(0)=0$, it suffices to show that $g_\alpha$ is monotonically decreasing on $[0,\infty)$. Taking a derivative with respect to $c$ we obtain,
$$\frac{d}{d c} g_\alpha (c)=2 \alpha (-1 + \alpha^2) \cosh(c \alpha) \sinh(c/2)^2$$
which is clearly non-positive when $c\ge 0$. Hence, the function $f_\alpha:[0,\infty)\to (0,1]$ is bijective. To obtain a solution for \eqref{Prop4ab}, fix $s\in (0, 1/2)\cup (1/2,1)$ and set $c=f_{\alpha(s)}^{-1} (a/b)$. With this, we can obtain the desired  $x$ and $y$ satisfying \eqref{Prop4ab}.

\end{proof}

\begin{remark}
Using similar arguments one can prove that if one of the following inequalities holds for any non-negative numbers $x \le y$,
\begin{itemize}
\item[1)] $f(x) \le f(\sqrt{xy});$\smallskip
\item[2)] $f\left(\dfrac{x+y}{2}\right) \le f(y)$,
\item[3)] ${\displaystyle
 f\left(\frac{x^s y ^{1-s}+x^{1-s}y^s}{2}\right) \le f\left(|2s-1|\frac{x+y}{2} + (1-|2s-1|)\sqrt{xy} \right).}
$
\end{itemize}
then the function $f$ is increasingly monotone on $\mathbb{R}^+.$
\end{remark}

\section{Characterization of Operator Monotone Functions via The Geometric Mean}\label{section 3}

In this section we use characterizations of symmetric means given in \cite{Audenaert} and of self-adjoint means given in \cite{Hansen} to establish matrix generalizations of the main results in the previous section.   

\subsection{Symmetric Means via Integral Representations}\label{Hansen}

Let us recall the definition of symmetric operator means. 
\begin{definition}
Let $f:\mathbb R_+\to \mathbb R_+$. We say that $f$ is {\it symmetric} (or $f\in \mathcal{F}_{op}$) if it satisfies the following conditions:
\begin{enumerate}
    \item $f$ is operator monotone,
    \item $tf(t^{-1})=f(t)$ for all $t\in \mathbb R_+$, and
    \item $f(1)=1$.
\end{enumerate}
\end{definition}
Notice that functions $\mathcal{F}_{op}$ are in one-to-one correspondence with symmetric means.  In \cite{Audenaert}, Audenaert {\it et.al.} introduced a new order in the set of symmetric functions as follows. 
\begin{definition}
For $f,g\in \mathcal{F}_{op}$, define
$$\psi(t)=\frac{t+1}{2}\frac{f(t)}{g(t)}, \quad t>0.$$
We say $f\preceq g$ if and only if $\psi \in \mathcal{F}_{op}$.
\end{definition}

It is clear that if $f\in \mathcal{F}_{op}$, $\frac{2t}{1+t}\preceq f(t)\preceq \frac{1+t}{2}$ as $\psi(t)=t^{-1}f(t)$ or $\psi(t)=f(t)$ in these particular cases, both of which are operator monotone. It is shown in \cite{Audenaert} that $\mathcal{F}_{op}$ forms a lattice under $\preceq$. It is worth noting that this order is stronger than the regular point-wise order $\le$. That is, if $f\preceq g$ then $f\le g$. 

The condition $f\in \mathcal{F}_{op}$ implies that $f$ has an integral representation of the form
\begin{equation}\label{IntRep}
f(t)=\frac{1+t}{2}e^{H(t)},
\end{equation}
where $$H(t)=\int_0^1 \frac{(\lambda^2-1)(1-t)^2}{(t+\lambda)(1+t\lambda)(\lambda+1)^2}h(\lambda)\, d\lambda$$
and $h:[0,1]\to [0,1]$ is a measurable function that is uniquely determined by $f$ {\it a.e.} \cite[Proposition 2.1]{Audenaert}. They also showed that \cite[Theorem 2.4]{Audenaert} $$f\preceq g \implies h_f\ge h_g \quad \textit{a.e.}$$  If $f\preceq g$ and $h_f \ne h_g$ on a set of non-zero measure, we will say  $f\prec g$.

\begin{lemma}\label{lemmaIncr}
Let $f\in \mathcal{F}_{op}$ and define $$\varphi(t)= t^{-1}f(t^2).$$
Then, 
\begin{enumerate}
\item If  $\sqrt{\cdot }\prec f$ then, as a real function, $\varphi$ is monotonically decreasing on $(0,1)$ and monotonically increasing on $(1,\infty)$.  
\item If  $\sqrt{\cdot }\succ f$ then $\varphi$ is monotonically increasing on $(0,1)$ and monotonically decreasing on  $(1,\infty)$.
\end{enumerate}
\end{lemma}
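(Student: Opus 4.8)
The plan is to reduce the statement to a single monotonicity claim about a scalar function built from the integral representation \eqref{IntRep}, and then to extract its sign from one explicit derivative computation. First I would rewrite $\varphi$ in a more convenient form. Writing $g(t)=\sqrt{t}$ (which belongs to $\mathcal{F}_{op}$, with representing function $h_g$), note that
$$\varphi(t)=t^{-1}f(t^2)=\frac{f(t^2)}{\sqrt{t^2}}=\psi(t^2),\qquad \psi(s):=\frac{f(s)}{\sqrt{s}}=e^{\rho(s)},\quad \rho(s):=H_f(s)-H_g(s).$$
Here I have used $f(s)=\frac{1+s}{2}e^{H_f(s)}$ and $g(s)=\sqrt s=\frac{1+s}{2}e^{H_g(s)}$, so that $\rho=\ln f-\tfrac12\ln(\cdot)=H_f-H_g$. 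Since $s\mapsto t^2$ is strictly increasing on $(0,\infty)$ and $\exp$ is increasing, $\varphi$ and $\rho$ have the same monotonicity behavior on the corresponding intervals; in particular it suffices to show that $\rho$ is decreasing on $(0,1)$ and increasing on $(1,\infty)$ under the hypothesis of (1), and the reverse under that of (2). Note $t\in(0,1)$ corresponds to $s=t^2\in(0,1)$ and $t\in(1,\infty)$ to $s\in(1,\infty)$, so the two regimes match up correctly.

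The key step is to differentiate $\rho$ through the integral representation. Using the kernel
$$K(t,\lambda)=\frac{(\lambda^2-1)(1-t)^2}{(t+\lambda)(1+t\lambda)(\lambda+1)^2},$$
we have $\rho(t)=\int_0^1 K(t,\lambda)\,\bigl(h_f(\lambda)-h_g(\lambda)\bigr)\,d\lambda$. Since $K$ and $\partial_t K$ are continuous in $(t,\lambda)$ and bounded on compact subintervals of $(0,\infty)$, and $h_f,h_g$ take values in $[0,1]$, differentiation under the integral sign is justified by the dominated convergence theorem, giving $\rho'(t)=\int_0^1 \partial_t K(t,\lambda)\,(h_f(\lambda)-h_g(\lambda))\,d\lambda$. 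The heart of the argument is the computation of $\partial_t K$, which after simplification factors cleanly as
$$\partial_t K(t,\lambda)=\frac{(\lambda^2-1)(t^2-1)}{\bigl[(t+\lambda)(1+t\lambda)\bigr]^2}.$$
I expect verifying this factorization (the numerator of the quotient-rule derivative collapses to $(1+\lambda)^2(t^2-1)$) to be the main computational obstacle, though it is routine; the remaining challenge is purely bookkeeping of signs.

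Finally I would read off the sign of $\rho'$. For $\lambda\in(0,1)$ the factor $\lambda^2-1<0$ and the denominator is positive, so $\partial_t K(t,\lambda)$ has the sign of $-(t^2-1)$: it is negative for $t>1$ and positive for $t<1$. Under the hypothesis of (1), $\sqrt{\cdot}\prec f$ means $h_{\sqrt{\cdot}}\ge h_f$, i.e.\ $h_f-h_g\le 0$ a.e., with strict inequality on a set of positive measure. Hence for $t>1$ the integrand is a product of a nonpositive factor and a negative factor, so $\rho'(t)>0$, while for $t<1$ it is the product of a nonpositive and a positive factor, so $\rho'(t)<0$; this yields the claimed strict monotonicity of $\varphi$. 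Case (2) follows identically after reversing the inequality $h_f-h_g\ge 0$, which flips both conclusions. The only subtlety worth flagging is that the strictness of $\prec$ (nonequality of the representing functions on a positive-measure set) is exactly what upgrades the signs to strict monotonicity.
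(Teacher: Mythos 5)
Your proof is correct and follows essentially the same route as the paper: both differentiate the Audenaert--Cai--Hansen integral representation, arrive at the same kernel derivative $\frac{(1-\lambda^2)(1-t^2)}{(t+\lambda)^2(1+t\lambda)^2}$, and conclude by comparing $h_f$ with the constant $1/2$ representing $\sqrt{\cdot}$ together with the sign of that kernel on $(0,1)$ versus $(1,\infty)$. The only cosmetic difference is that you take the logarithm of the ratio $f/\sqrt{\cdot}$ so the comparison appears as the sign of $h_f-h_g$, whereas the paper compares $H'(t)$ directly to the explicit value $\frac{1-t}{2t(1+t)}$ obtained by setting $h\equiv 1/2$.
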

\begin{proof}
Consider the derivative 
$$\varphi'(t) = -t^{-2}f(t^2)+2f'(t^2).$$
To show monotonicity as a real function, it suffices to show 
$$2tf'(t)\lessgtr f(t)$$
depending on the interval and the order relationship considered. Based on \eqref{IntRep} we consider
$$2tf'(t)=te^{H(t)}(1+(1+t)H'(t))\lessgtr f(t)$$
if and only if
$$H'(t)\lessgtr \frac{1-t}{2t(1+t)}.$$
Explicitly calculating $H'(t)$ we obtain
$$H'(t)=\int_0^1 \left(\frac{1}{(t+\lambda)^2}-\frac{1}{(1+t\lambda)^2}\right)h(\lambda) \, d\lambda =\int_0^1 \frac{(1-\lambda^2)(1-t^2)}{(t+\lambda)^2(1+t\lambda)^2}h(\lambda)\, d\lambda.$$  
An easy calculation shows that when $h(\lambda)$ is substituted by the constant function $1/2$, the integral becomes
$$\frac{1}{2}\int_0^1 \frac{(1-\lambda^2)(1-t^2)}{(t+\lambda)^2(1+t\lambda)^2}\, d\lambda=\frac{1-t}{2t(1+t)}.$$
So now we apply \cite[Theorem 2.4]{Audenaert} to determine the monotonicity of $\varphi$ in each case. So, let $\sqrt{\cdot }\prec f$ and $t\in (0,1)$. In this case $h(\lambda)\le 1/2$ and the integrand,
$$\frac{(1-\lambda^2)(1-t^2)}{(t+\lambda)^2(1+t\lambda)^2}h(\lambda)\ge 0$$
for all $(t,\lambda)\in (0,1)\times [0,1]$. Therefore,
$$H'(t)\le \frac{1-t}{2t(1+t)},$$
which implies that $\varphi$ is monotonically decreasing on $(0,1)$. When $t\in (1,\infty)$ the integrand is non-positive and the inequality is reversed, yielding that $\varphi$ is monotonically increasing on that interval. The analysis for $\sqrt{\cdot }\succ f$ is similar, but in this case $h(\lambda)\ge 1/2$.
\end{proof}

\begin{remark}
Another way to obtain the previous result would be by using the monotonicity on one interval and using the fact that $\varphi(t)=\varphi(t^{-1})$ by the symmetry condition (2) in the definition of the class $\mathcal{F}_{op}$. As a corollary, $\varphi$ has an absolute minimum/maximum at the point $(1,1)$.
\end{remark}

Suppose that $\sqrt{\cdot }\prec f$ . Then, $\sqrt{t}<f(t)$ for some $t\in(1,\infty)$. By the preceding lemma $\varphi$ is monotonically increasing on this interval, so
 $$\gamma:= \lim_{t\to \infty} \varphi(t) = \lim_{t\to \infty}  t^{-1}f(t^2)>1.$$
As a result the interval $(1,\gamma)$ is non-empty. 

On the other hand, suppose that $\sqrt{\cdot }\succ f$. Then, $\sqrt{t}>f(t)$ for some $t\in(1,\infty)$. In this case, however, $\varphi$ is monotonically decreasing on this interval, so
 $$\gamma:= \lim_{t\to \infty} \varphi(t) <1$$
and $(1,\gamma)$ is non-empty. 
 
\begin{lemma}\label{lemma12}
Let $\sigma$ be some symmetric operator mean on $\mathbb{R}^+$ with representing function $f$ such that $\sqrt{\cdot }\prec f$ (resp.  $\sqrt{\cdot }\succ f$) and let $\gamma = \lim_{t\to \infty}f(t^2)/t$. Then, if $X$ and $Y$ are positive definite operators such that $X\le Y < \gamma X$ (resp. $ \gamma X < Y \le X$), then there exist positive operators $A$ and $B$ such that 
\begin{equation*}
X =A\# B \quad \text{ and }\quad  Y=A\sigma B. 
\end{equation*}
\end{lemma}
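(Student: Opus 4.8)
The plan is to mimic the scalar argument of Theorem~\ref{ScalarCase} at the level of functional calculus, after first using the congruence (transformer) invariance of Kubo--Ando means to normalize $X$ to the identity. Recall that for any invertible $C$ one has $C(A\#B)C^*=(CAC^*)\#(CBC^*)$ and likewise $C(A\sigma B)C^*=(CAC^*)\sigma(CBC^*)$. Thus, conjugating the two desired identities by $X^{-1/2}$, I would set $A'=X^{-1/2}AX^{-1/2}$, $B'=X^{-1/2}BX^{-1/2}$, and $Y'=X^{-1/2}YX^{-1/2}$, so that the problem becomes: find positive definite $A',B'$ with $A'\#B'=I$ and $A'\sigma B'=Y'$. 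The same conjugation converts the hypothesis $X\le Y<\gamma X$ into $I\le Y'<\gamma I$ (resp.\ $\gamma I<Y'\le I$), i.e.\ every eigenvalue of $Y'$ lies in $[1,\gamma)$ (resp.\ in $(\gamma,1]$).

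The first constraint is easy to dispose of: $A'\#B'=I$ forces $B'=A'^{-1}$, so the single remaining unknown is $A'$ and the remaining equation is $A'\sigma A'^{-1}=Y'$. Writing the mean through its representing function $f$ via $A'\sigma A'^{-1}=A'^{1/2}f(A'^{-2})A'^{1/2}$ and using the symmetry relation $f(t)=tf(t^{-1})$, a direct functional-calculus computation gives
\begin{equation*}
A'\sigma A'^{-1}=\varphi(A'),\qquad \varphi(t)=t^{-1}f(t^2),
\end{equation*}
exactly the function studied in Lemma~\ref{lemmaIncr}. In particular $A'\sigma A'^{-1}$ commutes with $A'$, which tells me that any admissible $A'$ must be diagonal in an eigenbasis of $Y'$; this is the structural observation that reduces the operator problem to $n$ scalar equations.

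To finish, I would diagonalize $Y'=V\Lambda V^*$ with $\Lambda=\mathrm{diag}(\lambda_1,\dots,\lambda_n)$ and solve $\varphi(\mu_i)=\lambda_i$ eigenvalue-by-eigenvalue. By Lemma~\ref{lemmaIncr} together with the computation of $\gamma=\lim_{t\to\infty}\varphi(t)$ preceding this statement, $\varphi$ restricted to $[1,\infty)$ is a continuous strictly monotone bijection onto $[1,\gamma)$ (resp.\ onto $(\gamma,1]$), since $\varphi(1)=f(1)=1$; as each $\lambda_i$ lies in this range, a unique $\mu_i\ge 1$ exists. Setting $A'=V\,\mathrm{diag}(\mu_1,\dots,\mu_n)\,V^*=\varphi^{-1}(Y')$ and $B'=A'^{-1}$ solves the normalized problem, and undoing the congruence by putting $A=X^{1/2}A'X^{1/2}$ and $B=X^{1/2}B'X^{1/2}$ yields the claimed operators. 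The point requiring the most care---the genuine obstacle---is the passage from the scalar surjectivity argument of Theorem~\ref{ScalarCase} to the operator setting: because $A'\sigma A'^{-1}$ need not commute with an arbitrary $Y'$, one cannot simply ``apply $\varphi^{-1}$'' blindly, and it is precisely the identity $A'\sigma A'^{-1}=\varphi(A')$ that legitimizes choosing $A'$ in the eigenbasis of $Y'$ and thereby decouples the system into the scalar problems already settled in the previous section.
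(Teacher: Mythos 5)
Your proposal is correct and follows essentially the same route as the paper: reduce to $X=I$ by congruence invariance, observe that $A'\#B'=I$ forces $B'=A'^{-1}$, identify $A'\sigma A'^{-1}=\varphi(A')$ with $\varphi(t)=t^{-1}f(t^2)$, and take $A'=\varphi^{-1}(Y')$ using the bijectivity of $\varphi\colon[1,\infty)\to[1,\gamma)$ supplied by Lemma~\ref{lemmaIncr}. The only difference is cosmetic: you spell out the eigenbasis decomposition explicitly, while the paper simply applies $\varphi^{-1}$ via functional calculus.
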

\begin{proof}
Note that if we show that for $I\le X^{-1/2}YX^{-1/2}:=Y_0\le \gamma I_n$ we can find positive operators $A_0$ and $B_0$ such that:
\begin{equation*}
I_n =A_0\# B_0 \quad \text{ and } \quad Y_0=A_0\sigma B_0,
\end{equation*}
we can obtain the desired result by choosing $A:= X^{1/2}A_0X^{1/2}$ and $B:= X^{1/2}B_0X^{1/2}$. This is equivalent to the following problem: Given $I_n\le Y_0 \le \gamma I_n$ find $A_0\ge 0$ such that $$Y_0=A_0\sigma A_0^{-1}.$$
So, define $\varphi(t):=t\sigma t^{-1}=t f(t^{-2})$. By symmetry, we have that $\varphi(t)=t^{-1}f(t^2)$.  Since $\varphi(t)$ is continuous on $[1,\infty)$ and $\varphi(1)=f(1) = 1$, the function is bijective from $[1, \infty)$ onto $[1,\gamma)$. And so, we can define $A_0=\varphi^{-1}(Y_0)$. This gives the desired result. The proof for the case when $\sqrt{\cdot }\succ f$ is identical, but uses the fact that in this case $\varphi:[1,\infty)\to (\gamma, 1]$ is bijective instead.
\end{proof}

\begin{theorem}\label{Prop13}
Let $\sigma$ be some symmetric operator mean on $\mathbb{R}^+$ with representing function $f$ such that $\sqrt{\cdot }\prec f$. Then, if \begin{equation}\label{Hansen1}
g(A\# B) \le g(A\sigma B) 
\end{equation}
for any positive operators $A$ and $B$, then the function $g$ is operator monotone on $\mathbb{R}^+$. If, on the other hand, $\sqrt{\cdot }\succ f$ and 
\begin{equation}\label{Hansen2}
g(A\# B) \ge g(A\sigma B), 
\end{equation}
then $g$ is operator monotone on $\mathbb{R}^+$.
\end{theorem}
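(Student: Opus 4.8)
The plan is to show directly that $g$ is operator monotone, i.e.\ that $g(P)\le g(Q)$ for every pair of positive definite operators $P\le Q$; the continuous extension to $[0,\infty)$ then follows in the usual way. The starting point is the immediate consequence of Lemma~\ref{lemma12} together with the hypothesis. In the case $\sqrt{\cdot}\prec f$, whenever $X\le Y<\gamma X$ the lemma produces $A,B>0$ with $X=A\#B$ and $Y=A\sigma B$, so \eqref{Hansen1} gives $g(X)=g(A\#B)\le g(A\sigma B)=g(Y)$. Thus the hypothesis already yields the \emph{local} statement
$$X\le Y<\gamma X \implies g(X)\le g(Y),$$
and symmetrically, in the case $\sqrt{\cdot}\succ f$ (where $\gamma<1$), inequality \eqref{Hansen2} yields $g(X)\le g(Y)$ whenever $\gamma X< Y\le X$. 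The whole difficulty is to upgrade this local statement, valid only on the ``cone'' $Y<\gamma X$, to the global inequality $g(P)\le g(Q)$ for arbitrary $P\le Q$.

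The naive approach of chaining through the scalar dilations $\gamma_0^k P$, which works in Theorem~\ref{ScalarCase}, breaks down here: since the operator order is only a partial order, one cannot guarantee that $Q$ is comparable to $\gamma_0^{k+1}P$, whereas the local hypothesis requires a genuine operator inequality $Y<\gamma X$. I would resolve this by interpolating between $P$ and $Q$ along the weighted geometric mean, the natural path given that $\#$ is the mean under consideration. Writing $T=P^{-1/2}QP^{-1/2}\ge I$, set
$$Z_s:=P\#_s Q = P^{1/2}T^{s}P^{1/2},\qquad s\in[0,1],$$
so that $Z_0=P$, $Z_1=Q$, and, since $T\ge I$ forces $s\mapsto T^s$ to be operator increasing, the path $s\mapsto Z_s$ is operator increasing as well.

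The key computation is that the increments along this path are governed by a single scalar. For $s_i\le s_{i+1}$, conjugating by $P^{-1/2}$ and then by the commuting factor $T^{-s_i/2}$ shows that $Z_{s_{i+1}}<\gamma Z_{s_i}$ is equivalent to $T^{\,s_{i+1}-s_i}<\gamma I$, i.e.\ to $\|T\|^{\,s_{i+1}-s_i}<\gamma$ (and likewise $\gamma Z_{s_{i+1}}<Z_{s_i}$ is equivalent to $\|T\|^{\,s_{i+1}-s_i}<1/\gamma$ in the case $\gamma<1$). Hence, taking the uniform partition $s_i=i/n$ and choosing $n$ large enough that $\|T\|^{1/n}<\gamma$ (resp.\ $\|T\|^{1/n}<1/\gamma$)---possible since the relevant bound exceeds $1$ and $\|T\|^{1/n}\to 1$, unless $T=I$ in which case $P=Q$ and there is nothing to prove---each consecutive pair $(Z_{s_i},Z_{s_{i+1}})$ lands in the region where the local monotonicity applies. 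Telescoping,
$$g(P)=g(Z_{s_0})\le g(Z_{s_1})\le\cdots\le g(Z_{s_n})=g(Q),$$
which is the desired operator monotonicity. I expect the main obstacle to be precisely this passage from the local to the global statement: once the scalar dilation path is replaced by the geometric-mean path $P\#_s Q$, and one checks that its increments are controlled by the scalar $\|T\|^{1/n}$, the argument collapses to a finite telescoping of the hypothesis.
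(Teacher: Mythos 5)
Your proof is correct, and it reaches the conclusion by a genuinely different chaining construction than the paper's. Both arguments start from the same place: Lemma~\ref{lemma12} plus the hypothesis give the local statement that $g(X)\le g(Y)$ whenever $X\le Y<\gamma X$ (resp.\ $\gamma X<Y\le X$), and the whole task is to concatenate finitely many such steps between arbitrary $P\le Q$. Where the paper does this by diagonalizing $Y_0=X^{-1/2}YX^{-1/2}$ and lifting its eigenvalues one block at a time through powers of a fixed $\gamma_0\in(1,\gamma)$, you instead walk along the geodesic $Z_s=P\#_s Q=P^{1/2}T^sP^{1/2}$ with $T=P^{-1/2}QP^{-1/2}\ge I$, and your key observation --- that $Z_{s_{i+1}}<\gamma Z_{s_i}$ reduces, after conjugating by $P^{-1/2}$ and the commuting factor $T^{-s_i/2}$, to the single scalar condition $\lVert T\rVert^{s_{i+1}-s_i}<\gamma$ --- is verified correctly. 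This buys a cleaner chain: a uniform partition with $n$ chosen so that $\lVert T\rVert^{1/n}<\gamma$ (resp.\ $<1/\gamma$), with no spectral bookkeeping; it is also distinct from the Hiai--Ando-style linear path $a_kX+(1-a_k)Y$ that the paper mentions in a remark, and like the paper's argument it stays finite and avoids any limiting process. One small slip: in the case $\sqrt{\cdot}\succ f$, Lemma~\ref{lemma12} together with \eqref{Hansen2} gives $g(Y)\le g(X)$ (not $g(X)\le g(Y)$) when $\gamma X<Y\le X$, since $X=A\#B$ and $Y=A\sigma B$; this is the direction consistent with monotonicity, and it is in fact the direction you use when you telescope $g(Z_{s_i})\le g(Z_{s_{i+1}})$, so the error is purely in the statement of the local step and does not affect the argument.
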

\begin{proof}
First we prove \eqref{Hansen1}. Let $f$ and $\varphi$ be as in the proof of Lemma \ref{lemma12}. Assume that $f\succ \sqrt{\cdot}$ and choose $\gamma_0\in(1,\gamma)$. Let $0<X\le Y$ and $Y_0=X^{-1/2}Y X^{-1/2}$. Consider the spectral decomposition, $Y_0=\sum_{i=1}^n \lambda_i P_i$, with the eigenvalues $\lambda_i$ listed in non-ascending order. Then, there exists a set of non-ascending integers $\{m_i | 1\le i \le n\}$ such that 
$$\gamma_0^{m_i}< \lambda_i \le \gamma_0^{m_i+1}. $$
In particular, we have
\begin{multline*}
    I< \gamma_0 I < \gamma_0^2 I<...< \gamma_0^{m_n}I < \lambda_n P_n + \sum_{i=1}^{n-1}\gamma_0^{m_n}P_i \le  \lambda_n P_n + \sum_{i=1}^{n-1}\gamma_0^{m_n+1}P_i<\\  ... \le \sum_{j=0}^k\lambda_{n-j}P_{n-j}+ \sum_{i=1}^{n-k-1}\gamma_0^{m_{n-k}}P_i\le\sum_{j=0}^k\lambda_{n-j}P_{n-j}+ \sum_{i=1}^{n-k-1}\gamma_0^{m_{n-k}+1}P_i
    \\ \le ... \le  Y_0 \le \gamma_0^{m_1}I.
\end{multline*}
Multiplying each term of the chain of inequalities on both sides by $X^{1/2}$, we obtain the chain inequalities
$$
   0\le X \le \gamma_0 X \le \gamma_0^2 X<...<  Y \le \gamma_0^{m_1}X.
$$
Now consider the $k$-th and $k+1$-st terms of this chain. They satisfy the inequality,
\begin{multline*}
    Z_k:=\sum_{j=0}^k\lambda_{n-j}X^{1/2}P_{n-j}X^{1/2}+ \sum_{i=1}^{n-k-1}\gamma_0^{m_{n-k}}X^{1/2}P_iX^{1/2} \\ \le Z_{k+1}:=\sum_{j=0}^k\lambda_{n-j}X^{1/2}P_{n-j}X^{1/2}+ \sum_{i=1}^{n-k-1}\gamma_0^{m_{n-k}+1}X^{1/2}P_iX^{1/2}
    \\ \le \gamma\left(\sum_{j=0}^k\lambda_{n-j}X^{1/2}P_{n-j}X^{1/2}+ \sum_{i=1}^{n-k-1}\gamma_0^{m_{n-k}}X^{1/2}P_iX^{1/2}\right)=\gamma Z_k.
\end{multline*}
Thus, Lemma \ref{lemma12} implies that there exist  positive operators $A_k$ and $B_k$  such that:
\begin{equation*}
Z_k =A_k\# B_k \quad \text{ and } \quad  Z_{k+1} =A_k\sigma B_k 
\end{equation*}
and so 
$$g(X) \le g(Z_1) \le g(Z_2) \le  ...\le g(Z_n)=g(Y).$$
The proof of \eqref{Hansen2} is similar, hence omitted. 
\end{proof}

\begin{remark}
The proof of the theorem could also be obtained by a similar argument as the proof of \cite[Proposition 4.1]{hiai-ando}. Roughly, this would be achieved by constructing a decreasing sequence of positive numbers $\{a_k\}$ such that $a_k\to 0$ as $k\to \infty$ and $\gamma \ge (1-a_{k+1})/(1-a_k)$ for each $k$. With this sequence, one could construct a sequence of positive matrices with $k$-th term $$X_k=a_k X+(1-a_k)Y.$$
In this case, $0< X \le Y$ implies
$$X_k\le X_{k+1}\le \gamma X_k.$$
By its construction, the sequence $X_k$ converges to $Y$ in the operator norm. So, the continuity of $f$ implies $f(X)\le f(Y)$. Instead, in our proof we use the spectral decomposition of $Y_0$ to provide an explicit construction of a finite set of matrices that gives the result while avoiding the limiting process.
\end{remark}

\subsection{Self-Adjoint Means via Integral Representation}\label{SelfAdjointSection}

A mean $\sigma$ is said to be {\it self-adjoint} if it satisfies $$(A\sigma B)^{-1} = A^{-1}\sigma B^{-1} \quad \hbox{for any}\quad A, B > 0.$$
There exists a one-to-one correspondence between self-adjoint means and the class of operator monotone functions $\mathcal{E}$ defined below. This correspondence was considered by Hansen in \cite{Hansen} and a characterization was given in terms of the exponential of an integral. 

\begin{definition}
Let $f:\mathbb R_+\to \mathbb R_+$. We say that $f\in \mathcal{E}$ if it satisfies the following conditions:
\begin{enumerate}
    \item $f$ is operator monotone, and
    \item $f(t^{-1})=f(t)^{-1}$ for all $t\in \mathbb R_+$.
\end{enumerate}
\end{definition}
The aforementioned characterization is proved in \cite[Theorem 1.1]{Hansen} and it states that 
$$f(t)=\exp \int_{-1}^0\left(\frac{1}{\lambda-t}+\frac{t}{1-\lambda t}\right)h(\lambda)\, d\lambda,$$
where $h:[-1,0]\to[0,1]$ is a measurable function whose class is uniquely determined by $f$.

\begin{definition}
For $f,g\in \mathcal{E}$, we say $f\succeq_{sa} g$ if and only if $fg^{-1}$ is operator monotone.
\end{definition}

In the following, we show that this so defined relation satisfies the same properties as the order defined in \cite{Audenaert} on $\mathcal{F}_{op}$ that we introduced earlier in this section. 

\begin{proposition}\label{SAOrder}
Let $f,g\in \mathcal{E}$. Then, $f\succeq_{sa} g$ if and only if $h_f\ge h_g$ {\it a.e.}
\end{proposition}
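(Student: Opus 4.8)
The plan is to reduce the equivalence to the integral representation of \cite[Theorem 1.1]{Hansen} and to mirror the argument behind \cite[Theorem 2.4]{Audenaert}. The first observation is that the quotient $\phi:=fg^{-1}$ automatically inherits the self-adjoint symmetry: since $f(t^{-1})=f(t)^{-1}$ and $g(t^{-1})=g(t)^{-1}$, a direct computation gives $\phi(t^{-1})=f(t^{-1})/g(t^{-1})=g(t)/f(t)=\phi(t)^{-1}$. Thus $\phi$ always satisfies condition (2) in the definition of $\mathcal{E}$, and so the requirement that $\phi$ be operator monotone (i.e. $f\succeq_{sa} g$) is equivalent to the single assertion $\phi\in\mathcal{E}$. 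This is exactly the place where the self-adjoint setting is cleaner than the symmetric one, because no extra normalizing factor $\tfrac{1+t}{2}$ needs to be tracked.

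Next I would feed both $f$ and $g$ through the representation. Writing $k(\lambda,t)=\dfrac{1}{\lambda-t}+\dfrac{t}{1-\lambda t}$ for the kernel, we have $f(t)=\exp\int_{-1}^0 k(\lambda,t)h_f(\lambda)\,d\lambda$ and likewise for $g$, so that
$$\log\phi(t)=\int_{-1}^0 k(\lambda,t)\,\bigl(h_f(\lambda)-h_g(\lambda)\bigr)\,d\lambda.$$
This identifies $h_f-h_g$ as the natural candidate density for $\phi$. For the backward direction, assume $h_f\ge h_g$ a.e.; since $h_g\ge 0$ and $h_f\le 1$, the difference satisfies $0\le h_f-h_g\le h_f\le 1$ a.e., so it is an admissible $[0,1]$-valued density, and by the one-to-one correspondence of \cite[Theorem 1.1]{Hansen} the function $\exp\int_{-1}^0 k(\lambda,\cdot)(h_f-h_g)\,d\lambda$ belongs to $\mathcal{E}$. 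By the displayed identity this function is exactly $\phi$, so $\phi$ is operator monotone and $f\succeq_{sa} g$.

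For the forward direction, assume $\phi\in\mathcal{E}$. Then $\phi$ has its own representation with a density $h_\phi$ taking values in $[0,1]$, and comparing it with the display yields
$$\int_{-1}^0 k(\lambda,t)\,\bigl(h_f-h_g-h_\phi\bigr)(\lambda)\,d\lambda=0\qquad\text{for all }t>0.$$
Provided the transform is injective, this forces $h_\phi=h_f-h_g$ a.e., whence $h_f-h_g=h_\phi\ge 0$, i.e. $h_f\ge h_g$ a.e.

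The step I expect to be the main obstacle is precisely this uniqueness used in the forward direction. Hansen's theorem asserts that the density is uniquely determined \emph{among $[0,1]$-valued functions}, but here $h_f-h_g$ is only known a priori to be a bounded signed function, so I cannot quote that uniqueness verbatim. What is actually required is injectivity of the linear map sending a density $h$ to the function $t\mapsto\int_{-1}^0 k(\lambda,t)h(\lambda)\,d\lambda$ on bounded measurable functions on $[-1,0]$. I would obtain this from the Stieltjes-type inversion underlying Hansen's representation: the functions $t\mapsto(\lambda-t)^{-1}$ and $t\mapsto t(1-\lambda t)^{-1}$ are boundary data of Pick functions, and the density can be recovered from the boundary behaviour of the associated analytic function, so the transform annihilates only the zero function. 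Once this injectivity is established, the identification $h_\phi=h_f-h_g$ a.e. is immediate and the proposition follows.
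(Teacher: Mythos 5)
Your proposal is essentially the paper's own proof: both reduce the claim to the observation that $fg^{-1}$ automatically satisfies the symmetry $(fg^{-1})(t^{-1})=((fg^{-1})(t))^{-1}$, so that $f\succeq_{sa}g$ is equivalent to $fg^{-1}\in\mathcal{E}$, and then compare densities in Hansen's exponential representation, using linearity to identify the candidate density of $fg^{-1}$ with $h_f-h_g$. The only difference is that you explicitly flag and justify the injectivity of the transform on signed bounded densities (via Stieltjes-type inversion), a point the paper's one-line identification $h_{fg^{-1}}=h_f-h_g$ a.e.\ takes for granted; this is a legitimate refinement of the same argument, not a different route.
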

\begin{proof}
Note that $f,g \in \mathcal{E}$ implies that $(f/g)(t^{-1})=((f/g)(t))^{-1}.$
So, requiring $fg^{-1}$ be operator monotone is equivalent to requiring $fg^{-1}\in \mathcal{E}$. Therefore, there exists a class of measurable functions $h_{fg^{-1}}:[-1,0]\to [0,1]$ such that 
$$(fg^{-1})(t)=\exp \int_{-1}^0\left(\frac{1}{\lambda-t}+\frac{t}{1-\lambda t}\right)h_{fg^{-1}}(\lambda)\, d\lambda,$$
and 
$$h_{fg^{-1}}(\lambda)=h_f(\lambda)-h_g(\lambda) \quad {\it a.e.}$$
The result follows from this observation.
\end{proof}

\begin{proposition}
The set $\mathcal{E}$ together with the order relation $\succeq_{sa}$ is a lattice with minimal and maximal elements $f(t)=1$ and $f(t)=t$, respectively. Furthermore, there exists an involutive order reversing operation $\dagger:\mathcal{E}\to \mathcal{E}$.
\end{proposition}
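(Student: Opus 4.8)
The plan is to transport the whole statement through the bijective correspondence $f\mapsto h_f$ supplied by Hansen's integral representation, under which, by Proposition \ref{SAOrder}, the relation $\succeq_{sa}$ becomes the pointwise almost-everywhere order on $[0,1]$-valued measurable functions on $[-1,0]$. Concretely, I would first record two facts. First, the assignment $f\mapsto h_f$ is a bijection between $\mathcal{E}$ and the set of (a.e.-equivalence classes of) measurable functions $h\colon[-1,0]\to[0,1]$: injectivity is the uniqueness clause of Hansen's theorem, while surjectivity is the content of the characterization together with the observation that for each fixed $t>0$ the kernel $\frac{1}{\lambda-t}+\frac{t}{1-\lambda t}$ is continuous and bounded in $\lambda$ on $[-1,0]$ (there is no singularity, since $\lambda-t\le -t<0$ and $1-\lambda t\ge 1>0$ there), so the integral converges and defines a genuine element of $\mathcal{E}$ for any such $h$. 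Second, by Proposition \ref{SAOrder}, $f\succeq_{sa} g$ if and only if $h_f\ge h_g$ a.e. Together these reduce everything to the elementary fact that the $[0,1]$-valued functions under the a.e. order form a bounded lattice with order-reversing involution $h\mapsto 1-h$.

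For the lattice structure, given $f,g\in\mathcal{E}$ I would define $f\vee g$ and $f\wedge g$ to be the elements whose representing functions are $\max(h_f,h_g)$ and $\min(h_f,h_g)$; these are measurable and $[0,1]$-valued, hence lie in $\mathcal{E}$ by the first fact. That $f\vee g$ is the least upper bound is immediate from the second fact: $\max(h_f,h_g)\ge h_f,h_g$ gives $f\vee g\succeq_{sa} f$ and $f\vee g\succeq_{sa} g$, while any common upper bound $k$ satisfies $h_k\ge h_f$ and $h_k\ge h_g$ a.e., hence $h_k\ge\max(h_f,h_g)$, i.e. $k\succeq_{sa} f\vee g$. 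The argument for the greatest lower bound $f\wedge g$ is dual.

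For the extremal elements I would simply locate the constant representing functions. Taking $h\equiv 0$ makes the integral vanish and gives $f(t)=1$; taking $h\equiv 1$ and evaluating
$$\int_{-1}^0\left(\frac{1}{\lambda-t}+\frac{t}{1-\lambda t}\right)d\lambda=\big(\ln t-\ln(1+t)\big)+\ln(1+t)=\ln t$$
gives $f(t)=t$. Since $0\le h\le 1$ pointwise for every representing function, the second fact shows that $f(t)=1$ and $f(t)=t$ are respectively the bottom and top of the lattice.

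Finally, for the involution I would define $\dagger$ by $h_{f^\dagger}=1-h_f$. Since $1-h_f$ is again measurable and $[0,1]$-valued, $f^\dagger\in\mathcal{E}$ by the first fact; factoring the integral and using the computation $h\equiv 1\mapsto t$ yields the explicit formula $f^\dagger(t)=t/f(t)$. Involutivity is clear, as $1-(1-h_f)=h_f$ (equivalently $(f^\dagger)^\dagger(t)=t/(t/f(t))=f(t)$), and order reversal is immediate: $f\succeq_{sa} g$ means $h_f\ge h_g$ a.e., hence $1-h_f\le 1-h_g$, i.e. $g^\dagger\succeq_{sa} f^\dagger$; one checks in passing that $\dagger$ interchanges the two extremal elements. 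The only point genuinely requiring care is the surjectivity half of the first fact — that an arbitrary $[0,1]$-valued $h$ produces a bona fide operator monotone (and self-adjoint) $f$ — which is precisely where the regularity of the kernel for $t>0$ and the full strength of Hansen's characterization are invoked; granting this, every remaining step is a routine transcription of the lattice structure of $[0,1]$-valued functions under the a.e. order.
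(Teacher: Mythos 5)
Your proposal is correct and follows essentially the same route as the paper: both transport the problem through Hansen's integral representation and Proposition \ref{SAOrder}, defining the meet and join by taking $\min$ and $\max$ of the representing functions $h$. The only cosmetic differences are that you identify the extremal elements by computing $h\equiv 0$ and $h\equiv 1$ explicitly (the paper instead argues directly from operator monotonicity of $f$ and of $t/f(t)$), and that you derive the involution $f^\dagger(t)=t/f(t)$ from $h\mapsto 1-h$ rather than positing the formula outright, which has the mild advantage of making the membership $f^\dagger\in\mathcal{E}$ automatic.
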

\begin{proof}
 From Proposition \ref{SAOrder}, it is clear that $\succeq_{sa}$ defines an order relation on $\mathcal{E}$. Moreover, it is easy to see that $f\in \mathcal{E}$ implies that $1\preceq_{sa} f(t)\preceq_{sa} t$. Indeed, $1\preceq_{sa} f(t)$ follows from the monotonicity of $f$ and  $f(t)\preceq_{sa} t$ follows from the monotonicity of $\frac{t}{f(t)}$.

We  define the meet and join of any two elements in a similar fashion as in \cite{Audenaert}. For $f,g\in \mathcal{E}$  define:
\begin{align*}
    f\wedge g & =  \exp \int_{-1}^0\left(\frac{1}{\lambda-t}+\frac{t}{1-\lambda t}\right)\min\{h_f(\lambda),h_g(\lambda)\} \, d\lambda,\\
    f \vee g  & =   \exp \int_{-1}^0\left(\frac{1}{\lambda-t}+\frac{t}{1-\lambda t}\right)\max\{h_f(\lambda),h_g(\lambda)\} \, d\lambda.
\end{align*}
It is easy to see that,
$$ f\wedge g \preceq_{sa} f \preceq_{sa} f\vee g \preceq_{sa}. $$

We now shoe that the map $$f(t)\to f^\dagger(t)=\frac{t}{f(t)}$$
is an involutive order reversing map on $\mathcal{E}$. Indeed, it is easy to see $f^{\dagger\dagger}=f$,  
$$f^\dagger(t^{-1})=\frac{1}{tf(t^{-1})}=\frac{f(t)}{t}=(f^\dagger(t))^{-1},$$
and
$$f\preceq_{sa}g \quad \implies \quad g^\dagger \preceq_{sa} f^\dagger. $$
\end{proof}
Now we turn into a characterization of operator monotone functions using self-adjoint means. As before, if $f\preceq_{sa} g$ and $h_f \ne h_g$ on a set of non-zero measure, we will say  $f\prec_{sa} g$.

\begin{lemma}
Let $f\in \mathcal{E}$ and define $$\varphi(t)= t^{-1}f(t^2).$$
Then, 
\begin{enumerate}
\item If  $\sqrt{\cdot }\preceq_{sa} f$ then, as a real function, $\varphi$ is monotonically increasing on $\mathbb R_+$.  
\item If  $\sqrt{\cdot }\succeq_{sa} f$ then $\varphi$ is  monotonically decreasing on  $\mathbb R_+$.
\end{enumerate}
\end{lemma}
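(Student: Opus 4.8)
The plan is to mirror the proof of Lemma \ref{lemmaIncr}, but to exploit the fact that the integral representation for the class $\mathcal{E}$ produces a comparison kernel of \emph{constant} sign on all of $\mathbb{R}_+$; this is precisely what upgrades the conclusion to global monotonicity rather than a split at $t=1$. First I would differentiate, obtaining $\varphi'(t)=-t^{-2}f(t^2)+2f'(t^2)$, so that the sign of $\varphi'(t)$ is governed by comparing $2t^2 f'(t^2)$ with $f(t^2)$. Writing $s=t^2$, everything reduces to deciding whether $2s\,f'(s)/f(s)\gtrless 1$ for $s>0$.

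Next I would compute the logarithmic derivative directly from the representation in \cite{Hansen}. Differentiating $\log f(s)=\int_{-1}^0\left(\frac{1}{\lambda-s}+\frac{s}{1-\lambda s}\right)h(\lambda)\,d\lambda$ under the integral sign gives
$$\frac{f'(s)}{f(s)}=\int_{-1}^0\left(\frac{1}{(\lambda-s)^2}+\frac{1}{(1-\lambda s)^2}\right)h(\lambda)\,d\lambda,$$
so the quantity to analyze is $2s\int_{-1}^0 K(s,\lambda)h(\lambda)\,d\lambda$ with kernel $K(s,\lambda)=\frac{1}{(\lambda-s)^2}+\frac{1}{(1-\lambda s)^2}$. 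Since $\lambda\in[-1,0]$ and $s>0$ keep $\lambda-s\le -s<0$ and $1-\lambda s\ge 1$, the kernel is smooth and bounded in $s$ uniformly in $\lambda$ on compact subintervals of $(0,\infty)$, so differentiation under the integral is routine.

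The crux is to identify the correct threshold. I would check that $\sqrt{\cdot}$ corresponds to the constant function $h\equiv 1/2$: substituting $h\equiv c$ into the representation and carrying out the two elementary integrals gives $\log f(s)=c\log s$, so $f(s)=s^{1/2}$ forces $c=1/2$. A parallel evaluation of $2s\int_{-1}^0 K(s,\lambda)\cdot\tfrac12\,d\lambda$ (again two elementary antiderivatives whose $\frac{1}{1+s}$ terms cancel, leaving $\int_{-1}^0 K(s,\lambda)\,d\lambda=1/s$) confirms that $h\equiv 1/2$ produces exactly $2s\,f'(s)/f(s)=1$, which is the desired threshold value.

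Finally, the decisive observation is that $K(s,\lambda)>0$ for every $s>0$ and $\lambda\in[-1,0]$. Consequently, by Proposition \ref{SAOrder}, the hypothesis $\sqrt{\cdot}\preceq_{sa}f$ (equivalently $h_f\ge 1/2$ a.e.) yields $K(s,\lambda)h_f(\lambda)\ge \tfrac12 K(s,\lambda)$ pointwise, hence $2s\,f'(s)/f(s)\ge 1$ for \emph{all} $s>0$, so $\varphi'\ge 0$ throughout $\mathbb{R}_+$ and $\varphi$ is increasing; the case $\sqrt{\cdot}\succeq_{sa}f$ is identical with all inequalities reversed. I do not expect a genuine obstacle here: the only conceptual point, in contrast with Lemma \ref{lemmaIncr}, is that the kernel no longer changes sign at $t=1$, which is exactly why the monotonicity now holds on all of $\mathbb{R}_+$. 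As a sanity check consistent with this, the symmetry $f(t^{-1})=f(t)^{-1}$ forces $\varphi(t^{-1})=\varphi(t)^{-1}$ and $\varphi(1)=1$, matching a function that is globally monotone through the fixed point at $t=1$.
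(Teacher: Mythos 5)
Your proposal is correct and follows essentially the same route as the paper: reduce to $2sf'(s)\gtrless f(s)$, express $f'/f$ via Hansen's integral representation, and compare $h_f$ with the constant $1/2$ (which represents $\sqrt{\cdot}$ and yields $\int_{-1}^0 K(s,\lambda)\,d\lambda=1/s$), using the positivity of the kernel on all of $\mathbb{R}_+$. The paper's proof is just a terser version of the same argument; your extra observation that the kernel's constant sign is what makes the monotonicity global (unlike Lemma \ref{lemmaIncr}) is accurate.
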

\begin{proof}
As before, to show monotonicity as a real function, it suffices to show 
\begin{equation}\label{LGIneq}2tf'(t)\lessgtr f(t)\end{equation}
depending on the interval and the order relationship considered.  With this expression, \eqref{LGIneq} becomes:
$$2t\int_{-1}^0\left(\frac{1}{(\lambda-t)^2}+\frac{1}{(1-\lambda t)^2}\right)h(\lambda)\, d\lambda \lessgtr 1.$$
The result now follows from the fact that the integrand is non-negative and for $h(\lambda)=1/2$, $f(t)=\sqrt{t}$ and 
$$\int_{-1}^0\left(\frac{1}{(\lambda-t)^2}+\frac{1}{(1-\lambda t)^2}\right)\, d\lambda = \frac{1}{t}.$$
\end{proof}
Using the same arguments as in Lemma \ref{lemma12} and Theorem \ref{Prop13} we can show the following result.
\begin{theorem}\label{Prop11}
Let $\sigma$ be some self-adjoint operator mean on $\mathbb{R}^+$ with representing function $f$ such that $\sqrt{\cdot }\prec_{sa} f$. Then, if  \begin{equation}\label{geomA1}
g(A\# B) \le g(A\sigma B)
\end{equation}
for any positive operators $A$ and $B$ such that $A<B$, then the function $g$ is operator monotone on $\mathbb{R}^+$. If, on the other hand, $\sqrt{\cdot}\prec_{sa} f^{\dagger}$ and 
\begin{equation}\label{geomA2}
g(A\# B) \ge g(A\sigma B), 
\end{equation}
for such positive operators, then $g$ is operator monotone on $\mathbb{R}^+$.
\end{theorem}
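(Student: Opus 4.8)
The plan is to reproduce, in the self-adjoint setting, the two-step scheme behind Theorem \ref{Prop13}: first a realization lemma playing the role of Lemma \ref{lemma12}, and then a chaining argument over the spectrum playing the role of the proof of Theorem \ref{Prop13}. Because the hypothesis is assumed only for pairs with $A<B$, I would first reduce to proving $g(X)\le g(Y)$ for \emph{strictly} ordered $0<X<Y$; the general inequality $g(X)\le g(Y)$ for $X\le Y$ then follows by applying the strict case to $X<Y+\varepsilon I$ and letting $\varepsilon\to 0^+$, using the continuity of $g$.

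For the realization step, I would set $\psi(t)=t\,f(t^{-2})$, the scalar function describing $A_0\sigma A_0^{-1}$ via $A\sigma B=A^{1/2}f(A^{-1/2}BA^{-1/2})A^{1/2}$. The crucial difference from the symmetric case is that the self-adjoint identity $f(t^{-1})=f(t)^{-1}$ forces $\psi(t)=t/f(t^2)=1/\varphi(t)$, where $\varphi(t)=t^{-1}f(t^2)$ is the function of the preceding lemma. Since $\sqrt{\cdot}\prec_{sa}f$ makes $\varphi$ strictly increasing on $\mathbb{R}_+$ with $\varphi(1)=1$ and $\varphi(t^{-1})=\varphi(t)^{-1}$, the function $\psi$ is a strictly decreasing bijection of $\mathbb{R}_+$ onto $(1/\gamma,\gamma)$ with $\psi(1)=1$, where $\gamma=\lim_{t\to\infty}f(t^2)/t>1$; its restriction $\psi\colon(0,1)\to(1,\gamma)$ is then a decreasing bijection. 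Consequently, given $0<X<Y$ with $Y_0:=X^{-1/2}YX^{-1/2}$ having spectrum in $(1,\gamma)$, I would put $A_0=\psi^{-1}(Y_0)$ (so $A_0$ has spectrum in $(0,1)$), $A=X^{1/2}A_0X^{1/2}$, and $B=X^{1/2}A_0^{-1}X^{1/2}$. The congruence invariance of Kubo--Ando means gives $A\#B=X$ and $A\sigma B=Y$, while $A_0<I<A_0^{-1}$ yields the required strict inequality $A<B$. This is precisely the mechanism by which the constraint $A<B$ selects the branch of $\psi$ matching the direction $X<Y$; in the symmetric case no such constraint was needed because $\varphi$ folds at $t=1$ and $Y\ge X$ holds automatically.

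With this realization in hand, I would mirror the proof of Theorem \ref{Prop13} almost verbatim: fix $\gamma_0\in(1,\gamma)$, take the spectral decomposition of $Y_0\ge I$, and build a finite chain $X=Z_0<Z_1<\dots<Z_N=Y$ whose consecutive terms satisfy $Z_k<Z_{k+1}\le\gamma_0 Z_k<\gamma Z_k$. Applying the realization step to each pair $(Z_k,Z_{k+1})$ produces $A_k<B_k$ with $Z_k=A_k\#B_k$ and $Z_{k+1}=A_k\sigma B_k$, so hypothesis \eqref{geomA1} gives $g(Z_k)\le g(Z_{k+1})$ and hence $g(X)\le g(Y)$. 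For the second statement I would reduce to the first through the order-reversing involution $\dagger$: since $\sqrt{\cdot}$ is $\dagger$-self-dual, $\sqrt{\cdot}\prec_{sa}f^\dagger$ is equivalent to $f\prec_{sa}\sqrt{\cdot}$, i.e. $\sqrt{\cdot}\succ_{sa}f$; by the preceding lemma $\varphi$ is now decreasing and $\psi$ increasing, so the branch $A<B$ realizes pairs with $Y<X$ and the reversed hypothesis \eqref{geomA2} again yields monotonicity. I expect the only genuine obstacle to be bookkeeping the reciprocal relation $\psi=1/\varphi$ forced by self-adjointness and verifying that the constraint $A<B$ is compatible with—indeed pins down—the correct sign of $Y-X$; once this is settled, the spectral chaining is identical to that of Theorem \ref{Prop13}.
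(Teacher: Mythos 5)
Your proposal is correct and follows essentially the route the paper intends: the paper gives no written proof of Theorem \ref{Prop11}, saying only that it follows by the arguments of Lemma \ref{lemma12} and Theorem \ref{Prop13}, and your realization step (with $\psi(t)=t f(t^{-2})=1/\varphi(t)$ by self-adjointness, so that $A_0<I<A_0^{-1}$ forces the branch $A<B$ and the sign of $Y-X$) together with the spectral chaining is exactly that adaptation, worked out in more detail than the paper provides. One small point to tidy: if you copy the chain of Theorem \ref{Prop13} literally, consecutive terms $Z_k\le Z_{k+1}$ need not be \emph{strictly} ordered (they may agree on part of the spectrum), so either first extend the hypothesis from $A<B$ to $A\le B$ by continuity of $g$, or use a strictly increasing commuting chain such as $Z_k=X^{1/2}Y_0^{k/N}X^{1/2}$ with $N$ large enough that $\|Y_0\|^{1/N}<\gamma$.
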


\subsection{Characterization with Kubo-Ando Condition}\label{KAConditionSect}

There is yet another class of means to consider. Let $\tau$ and $\tau^\perp$ be the means represented by operator monotone functions $g$ and $g^\dagger$, respectively. Kubo and Ando showed in \cite[Theorem 5.4]{Kubo-Ando} that if an operator mean $\sigma$ with representing function $f$ satisfies 
\begin{equation}\label{KACondition}
(A\tau B)\sigma (A\tau^\perp B)\le A\sigma B
\end{equation}
for a non-trivial mean $\sigma$ and all positive operators $A$ and $B$ then $f\ge \sqrt{\cdot}$. Moreover, in \cite[Theorem 5.7]{Kubo-Ando}, they showed that whenever $\sigma$ satisfies \eqref{KACondition} for every operator mean $\tau$ its representing function $f$ satisfies $t^{-1}f(t^2)$ is non-increasing on $(0,1)$ and non-decreasing on $(1,\infty)$. Moreover, in subsequent corollaries, they showed that if the inequality \eqref{KACondition} is reversed then $f\le \sqrt{\cdot}$ and $t^{-1}f(t^2)$ is non-decreasing on $(0,1)$ and non-increasing on $(1,\infty)$.

These are precisely the behaviors needed in the proof of Lemma \ref{lemma12} and consequently Theorem \ref{Prop13}. Therefore, this allows us to follow the same arguments to show a similar result for this particular class of means.
\begin{theorem}\label{Prop12}
Let $A$ and $B$ be positive operators and $\sigma$ be an operator mean on $\mathbb{R}^+$ satisfying \eqref{KACondition} for every operator mean $\tau$. Assume further that the representing function $f$ satisfies $f(x)>\sqrt{x}$ for some $x\in (1,\infty)$. Then, if  \begin{equation}
g(A\#B) \le g(A\sigma B),
\end{equation}
then the function $g$ is operator monotone on $\mathbb{R}^+$. If, on the other hand, the reversed inequality is satisfied in \eqref{KACondition} for every operator mean $\sigma$, $f(x)<\sqrt{x}$ for some $x\in (0,1)$, and 
\begin{equation}
g(A\#B) \ge g(A\sigma B), 
\end{equation}
then $g$ is operator monotone on $\mathbb{R}^+$.
\end{theorem}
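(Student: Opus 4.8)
The plan is to follow the proofs of Lemma~\ref{lemma12} and Theorem~\ref{Prop13} almost verbatim, the only change being that the strict monotonicity of $\varphi(t)=t^{-1}f(t^2)$ that Lemma~\ref{lemmaIncr} extracted from the integral representation is now supplied directly by the Kubo--Ando structure theorems quoted above. For the first assertion I would start from \cite[Theorem 5.7]{Kubo-Ando}: because $\sigma$ satisfies \eqref{KACondition} for every mean $\tau$, its representing function makes $\varphi$ non-increasing on $(0,1)$ and non-decreasing on $(1,\infty)$, with $\varphi(1)=f(1)=1$. The extra hypothesis $f(x)>\sqrt{x}$ for some $x\in(1,\infty)$ gives $\varphi(\sqrt{x})=x^{-1/2}f(x)>1$ with $\sqrt{x}>1$, and monotonicity then forces $\gamma:=\lim_{t\to\infty}\varphi(t)>1$, so that $(1,\gamma)$ is non-empty. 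This is exactly the conclusion of Lemma~\ref{lemmaIncr} that powered Lemma~\ref{lemma12}.

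Next I would reprove Lemma~\ref{lemma12} in this setting. Given positive definite $X\le Y<\gamma X$, conjugating by $X^{-1/2}$ and setting $Y_0=X^{-1/2}YX^{-1/2}$ reduces the problem to producing, for $I\le Y_0\le\gamma I$, a positive $A_0$ with $Y_0=A_0\sigma A_0^{-1}$. Since $\varphi$ is continuous with $\varphi(1)=1$ and $\lim_{t\to\infty}\varphi(t)=\gamma$, the Intermediate Value Theorem shows $\varphi$ maps $[1,\infty)$ onto $[1,\gamma)$. Writing $Y_0=\sum_i\lambda_i P_i$ with each $\lambda_i\in[1,\gamma)$ and choosing, for each $i$, some $a_i\in[1,\infty)$ with $\varphi(a_i)=\lambda_i$, the operator $A_0=\sum_i a_iP_i$ satisfies $A_0\sigma A_0^{-1}=\sum_i(a_i\sigma a_i^{-1})P_i=\sum_i\varphi(a_i)P_i=Y_0$, since $A_0$ and $A_0^{-1}$ are simultaneously diagonalized by the $P_i$. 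Undoing the congruence yields $A=X^{1/2}A_0X^{1/2}$ and $B=X^{1/2}A_0^{-1}X^{1/2}$ with $X=A\#B$ and $Y=A\sigma B$.

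With this lemma the proof of Theorem~\ref{Prop13} transfers without change: for $0<X\le Y$ I would form $Y_0=X^{-1/2}YX^{-1/2}$, fix $\gamma_0\in(1,\gamma)$, and use the spectral decomposition of $Y_0$ to build the same finite chain $X=Z_0\le Z_1\le\cdots\le Z_n=Y$ with $Z_k\le Z_{k+1}\le\gamma Z_k$. The lemma writes each consecutive pair as $Z_k=A_k\#B_k$, $Z_{k+1}=A_k\sigma B_k$, so the hypothesis $g(A\#B)\le g(A\sigma B)$ gives $g(Z_k)\le g(Z_{k+1})$ and hence $g(X)\le g(Y)$. The reversed statement is symmetric, with the two ends of $(0,\infty)$ interchanged: the reversed form of \eqref{KACondition} makes $\varphi$ non-decreasing on $(0,1)$ and non-increasing on $(1,\infty)$, the hypothesis $f(x)<\sqrt{x}$ for some $x\in(0,1)$ gives $\varphi(\sqrt{x})<1$ with $\sqrt{x}\in(0,1)$ and hence $\gamma:=\lim_{t\to 0^+}\varphi(t)<1$, and one runs the analogue of Lemma~\ref{lemma12} with $\varphi$ restricted to $(0,1]$, building a chain that descends from $Y$ to $X$ with step ratios in $(\gamma,1]$; combined with $g(A\#B)\ge g(A\sigma B)$ this again yields $g(X)\le g(Y)$.

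The one genuinely new point, and the step I expect to need the most care, is the construction of $A_0$. In Theorem~\ref{Prop13} the map $\varphi$ was \emph{strictly} monotone by Lemma~\ref{lemmaIncr}, hence an honest bijection with a well-defined inverse; here \cite[Theorem 5.7]{Kubo-Ando} only supplies \emph{weak} monotonicity, so $\varphi$ need not be injective and $\varphi^{-1}$ is not literally a function. The remedy is that only surjectivity onto the relevant interval is used: one selects an arbitrary preimage of each eigenvalue of $Y_0$ and assembles $A_0$ on the finitely many spectral projections of $Y_0$, which is legitimate precisely because $Y_0$ is a finite spectral sum. A secondary point to verify carefully is that the hypotheses on $f$ versus $\sqrt{\cdot}$ place the limiting value $\gamma$ strictly on the correct side of $1$ — and, in the reversed case, that it is the behavior of $\varphi$ at the \emph{opposite} endpoint of $(0,\infty)$ that must be invoked — so that the chain terminates after finitely many steps; this is the analogue of strengthening $\preceq$ to the strict order $\prec$ in Theorem~\ref{Prop13}.
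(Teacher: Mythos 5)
Your proposal is correct and follows essentially the same route the paper intends: it invokes the Kubo--Ando structure theorems to supply the monotonicity of $\varphi(t)=t^{-1}f(t^2)$ in place of Lemma~\ref{lemmaIncr}, and then reruns Lemma~\ref{lemma12} and the chain argument of Theorem~\ref{Prop13}. The one subtlety you flag --- that weak monotonicity only gives surjectivity of $\varphi$, so one must select arbitrary preimages on the spectral projections rather than apply a genuine inverse --- is exactly the relaxation the paper itself makes in Lemma~\ref{lemma13}, so your treatment matches (and in fact spells out more explicitly) the argument the paper leaves implicit.
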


\subsection{General Symmetric Means}

 In this section, we show that the answer to Question (C) is positive in general.  To prove Theorem \ref{Prop13} we used monotonicity of the function $\varphi$ on certain intervals to obtain bijectivity, thus obtaining a well-defined $\varphi^{-1}$ when restricted to the appropriate intervals. With this function, we were able to solve the problem in Lemma \ref{lemma12}, which then allowed us to obtain the desired characterization. With a little care, it is possible to obtain the same result when $\varphi$ is only surjective on the prescribed intervals. 

We recall some of our notation from Section \ref{Hansen}. Suppose that $\sqrt{\cdot }\lessgtr f$, as before define $\varphi(t)=t^{-1}f(t^2)$. Then, we have 
 $$\gamma:= \lim_{t\to \infty} \varphi(t) = \lim_{t\to \infty}  t^{-1}f(t^2)\lessgtr 1.$$

With this we can show a lemma equivalent to Lemma \ref{lemma12}.
 
\begin{lemma}\label{lemma13}
Let $\sigma$ be some symmetric operator mean on $\mathbb{R}^+$ with representing function $f$ such that $\sqrt{\cdot }< f$ (resp.  $\sqrt{\cdot }> f$) and let $\gamma = \lim_{t\to \infty}f(t^2)/t$. Then, if $X$ and $Y$ are positive definite matrices such that $X\le Y < \gamma X$ (resp. $ \gamma X < Y \le X$), then there exist positive matrices $A$ and $B$ such that 
\begin{equation*}
X =A\# B \quad \text{ and } \quad Y=A\sigma B. 
\end{equation*}
\end{lemma}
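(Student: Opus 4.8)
The plan is to follow the structure of the proof of Lemma~\ref{lemma12} and reduce the problem, via the congruence invariance of operator means, to solving a single equation in one positive operator; the only new feature is that under the weaker pointwise hypothesis $\sqrt{\cdot}<f$ (resp. $\sqrt{\cdot}>f$) the auxiliary function $\varphi(t)=t^{-1}f(t^2)$ need no longer be injective, so in place of a global inverse $\varphi^{-1}$ I will use only surjectivity together with the finiteness of the spectrum of a matrix.

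First I would reduce to a normalized problem. Setting $Y_0:=X^{-1/2}YX^{-1/2}$, the hypothesis $X\le Y<\gamma X$ (resp. $\gamma X<Y\le X$) becomes $I\le Y_0<\gamma I$ (resp. $\gamma I<Y_0\le I$). It then suffices to produce a positive matrix $A_0$ with $A_0\#A_0^{-1}=I$ and $A_0\sigma A_0^{-1}=Y_0$: setting $A:=X^{1/2}A_0X^{1/2}$ and $B:=X^{1/2}A_0^{-1}X^{1/2}$ gives positive matrices that, by the transformer invariance $X^{1/2}(C\sigma D)X^{1/2}=(X^{1/2}CX^{1/2})\sigma(X^{1/2}DX^{1/2})$ applied to both $\#$ and $\sigma$, satisfy $A\#B=X$ and $A\sigma B=Y$. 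Here $A_0\#A_0^{-1}=I$ holds automatically for any $A_0>0$, and since $A_0\sigma A_0^{-1}=A_0^{1/2}f(A_0^{-2})A_0^{1/2}=A_0f(A_0^{-2})$, the symmetry relation $tf(t^{-1})=f(t)$ yields $A_0\sigma A_0^{-1}=\varphi(A_0)$ with $\varphi(t)=t^{-1}f(t^2)$. Thus the task reduces to solving $\varphi(A_0)=Y_0$.

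Next I would establish surjectivity of $\varphi$ onto the relevant interval. Since $f$ is operator monotone and hence continuous, $\varphi$ is continuous on $[1,\infty)$, with $\varphi(1)=f(1)=1$ and $\lim_{t\to\infty}\varphi(t)=\gamma$. In the case $\sqrt{\cdot}<f$ one has $\gamma>1$, and the Intermediate Value Theorem shows that $\varphi$ attains every value of $[1,\gamma)$: for $c\in[1,\gamma)$ choose $T$ with $\varphi(T)>c$ and use $\varphi(1)=1\le c$. (In the resp. case $\gamma<1$ and $\varphi$ is surjective onto $(\gamma,1]$.) This is precisely the step that replaces the bijectivity argument of Lemma~\ref{lemma12}, and it now uses only the pointwise order rather than the strong order $\prec$.

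Finally, I would exploit that $X$ and $Y$ are matrices. Writing the spectral decomposition $Y_0=\sum_i\lambda_iP_i$, every eigenvalue $\lambda_i$ lies in $[1,\gamma)$ (resp. $(\gamma,1]$), so by surjectivity I may choose, for each $i$, some $t_i\ge 1$ with $\varphi(t_i)=\lambda_i$ and set $A_0:=\sum_i t_iP_i>0$. Then $\varphi(A_0)=\sum_i\varphi(t_i)P_i=\sum_i\lambda_iP_i=Y_0$, which completes the construction. I expect the failure of injectivity to be the only genuine obstacle, since it forces a \emph{choice} of preimages rather than the clean assignment $A_0=\varphi^{-1}(Y_0)$ used before. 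In finite dimensions this is harmless because the spectrum is finite and the selection is made at finitely many points; in an infinite-dimensional formulation one would instead need a Borel section of $\varphi$ (for instance $\lambda\mapsto\min\varphi^{-1}(\lambda)$, well defined because $\varphi^{-1}(\lambda)$ is a nonempty closed subset of $[1,\infty)$) to define $A_0$ through the functional calculus.
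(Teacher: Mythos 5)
Your proposal is correct and follows essentially the same route as the paper's proof: reduce by congruence to solving $\varphi(A_0)=Y_0$ with $\varphi(t)=t^{-1}f(t^2)$, replace the bijectivity of $\varphi$ used in Lemma~\ref{lemma12} by surjectivity onto $[1,\gamma)$ via continuity, and use the spectral decomposition of $Y_0$ to select a preimage for each eigenvalue. The only additions beyond the paper's argument are the explicit intermediate value step and the closing remark on Borel sections in infinite dimensions, neither of which changes the method.
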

\begin{proof}
As in the proof of Lemma \ref{lemma12}, we show the lemma when $\sqrt{\cdot }<f$. In this case,  it suffices to show that given $I_n\le Y_0=U\, \text{diag}(\{\lambda_i(Y_0)\})\, U^* \le \gamma I_n$, we can find $A_0\ge 0$ such that $$Y_0=A_0\sigma A_0^{-1}=\varphi(A_0).$$
While $\varphi(t)$ is not necessarily bijective in this case, it is continuous on $[1,\infty)$ and $\varphi(1)=f(1) = 1$. Therefore, the restriction of $\varphi$ to some subset of $[1, \infty)$ is surjective onto $[1,\gamma)$. 

Since $\sigma(Y_0)\subset [1,\gamma)$, surjectivity of the restriction of $\varphi$ implies that the set $$\varphi^{-1}(\lambda_i(Y_0)):=\{x \in [1,\infty) \ | \ \varphi(x)=\lambda_i(Y_0)\} \ne \emptyset.$$
In particular, if we choose $\delta_i(Y_0) \in \varphi^{-1}(\lambda_i(Y_0))$ for each $i$,  the matrix
$$A_0:=U\text{diag}(\{\delta_i(Y_0)\})U^*$$
satisfies 
$$\varphi(A_0)=U\text{diag}(\{\varphi(\delta_i(Y_0))\})U^*=U\text{diag}(\{\lambda_i(Y_0)\})U^*,$$
and the result follows as in Lemma \ref{lemma12}.
\end{proof}

\begin{remark}
In \cite{hiai-ando, hoa}, considering similar questions, the authors required the functions equivalent to $\varphi$ to be bijective. In the proof of this lemma we relax the condition on $\varphi$ to only require surjectivity. This means that the solutions to the desired system of equations may not be unique, but their existence is guaranteed.  
\end{remark}

Now using the same argument as in the proof of Theorem \ref{Prop13} we can show the following theorem.

\begin{theorem}\label{Prop14}
Let $f$ be a continuous function on $(0,\infty)$. Then, $f$ is operator monotone if and only if either one of the following holds:
\begin{enumerate}
    \item If $f(A\#B)\le f(A\sigma B)$ for all positive definite $A$ and $B$ and some symmetric operator mean $ \# < \sigma$.
    \item If $f(A\#B)\ge f(A\sigma B)$ for all positive definite $A$ and $B$ and some symmetric operator mean $ \# >\sigma$.
\end{enumerate}
\end{theorem}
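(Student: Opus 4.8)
The ``only if'' direction is immediate: if $f$ is operator monotone and $\#<\sigma$, then the pointwise inequality $\sqrt{\cdot}<f_\sigma$ between the representing functions (where $f_\sigma$ denotes the representing function of $\sigma$) yields the operator inequality $A\#B\le A\sigma B$ for all positive definite $A,B$ by the Kubo--Ando correspondence, and applying operator monotonicity of $f$ gives condition (1). Case (2) follows the same way with the inequalities reversed. The substance of the theorem is therefore the converse, and the plan is to reproduce the argument of Theorem \ref{Prop13} essentially verbatim, replacing the single use of Lemma \ref{lemma12} by Lemma \ref{lemma13} at the one place where bijectivity of $\varphi$ was invoked.

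Concretely, to prove that (1) forces operator monotonicity I would fix arbitrary positive definite operators $0<X\le Y$, set $Y_0=X^{-1/2}YX^{-1/2}\ge I$, and choose $\gamma_0\in(1,\gamma)$ where $\gamma=\lim_{t\to\infty}t^{-1}f_\sigma(t^2)$. Using the spectral decomposition $Y_0=\sum_i\lambda_iP_i$ exactly as in Theorem \ref{Prop13}, I would interpolate between $X$ and $Y$ by a finite chain of positive operators $X=Z_0\le Z_1\le\cdots\le Z_n=Y$ arranged so that each consecutive pair satisfies $Z_k\le Z_{k+1}\le\gamma Z_k$. This construction does not use any monotonicity of $\varphi$; it relies only on the value of $\gamma$ together with the spectral data of $Y_0$, so it transfers without change.

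For each consecutive pair I would then apply Lemma \ref{lemma13} to produce positive operators $A_k,B_k$ with $Z_k=A_k\#B_k$ and $Z_{k+1}=A_k\sigma B_k$; hypothesis (1) gives $f(Z_k)=f(A_k\#B_k)\le f(A_k\sigma B_k)=f(Z_{k+1})$, and chaining these inequalities yields $f(X)\le f(Y)$, which is the desired operator monotonicity. Case (2) is symmetric, using $\sqrt{\cdot}>f_\sigma$, $\gamma<1$, and the reversed hypothesis. The only point requiring care, and the essential difference from Theorem \ref{Prop13}, is that under the mere pointwise order $\#<\sigma$ the function $\varphi(t)=t^{-1}f_\sigma(t^2)$ need no longer be monotone, so the preimages $A_k,B_k$ are not unique; this is precisely what Lemma \ref{lemma13} accommodates by requiring only surjectivity of $\varphi$ onto $[1,\gamma)$ rather than bijectivity. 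The accompanying fact that $\gamma>1$, so that $(1,\gamma)$ is nonempty and the interpolation is possible, is guaranteed by the pointwise hypothesis $\sqrt{t}<f_\sigma(t)$ together with the limit computation recorded just before Lemma \ref{lemma13}.
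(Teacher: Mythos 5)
Your proposal is correct and follows essentially the same route as the paper: the paper proves Theorem \ref{Prop14} by rerunning the spectral-decomposition chain argument of Theorem \ref{Prop13} verbatim, with the single substitution of Lemma \ref{lemma13} (surjectivity of $\varphi$ onto $[1,\gamma)$, non-unique preimages) for Lemma \ref{lemma12} (bijectivity), exactly as you describe. You also correctly isolate the only delicate point, namely that the pointwise order $\#<\sigma$ no longer guarantees monotonicity of $\varphi(t)=t^{-1}f_\sigma(t^2)$, and that $\gamma>1$ comes from the limit statement preceding Lemma \ref{lemma13}.
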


\section{Further Characterizations}\label{OperVers}
Notice that from Equation \eqref{general11} we have the following inequalities for matrix means:
\begin{equation*}
A\# B \le \frac{A\sharp_s B + A\sharp_{1-s} B}{2} \le \alpha(s)^2 \frac{A+B}{2} + (1-\alpha(s)^2) A\sharp B  \le\frac{A+B}{2},
\end{equation*}
In this section, using above inequalities we establish new characterizations of operator monotone functions.

\begin{theorem}\label{theorem 5}
Let $f$ be a continuous function on $[0,\infty)$, $s\in (0, 1/2)\cup (1/2,1)$ and $\alpha = 1-2s$. The following statements are equivalent:
\begin{itemize}
\item[(i)] $f$ is operator monotone on $[0,\infty);$
\item[(ii)] For any positive definite matrices $A$ and $B,$
\begin{equation}\label{jose}
   f(A\sharp B)\le  f\left( \frac{A\sharp_s B + A\sharp_{1-s} B}{2}\right) .
\end{equation}
\item[(iii)] For any positive definite matrices $A$ and $B,$
\begin{equation}\label{bh}
    f\left( \frac{A\sharp_s B + A\sharp_{1-s} B}{2}\right) \le  f\left(\alpha(s)^2 \frac{A+B}{2} + (1-\alpha(s)^2) A\sharp B \right);
\end{equation}

\item[(iv)] For any positive definite matrices $A$ and $B$,
\begin{equation*}
f\left(\alpha(s)^2 \frac{A+B}{2} + (1-\alpha(s)^2) A\sharp B \right) \le f\left(\frac{A+B}{2}\right).
\end{equation*}
\end{itemize}
\end{theorem}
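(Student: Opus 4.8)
The plan is to prove the four-way equivalence by showing that (i) implies each of (ii), (iii), (iv) and that each of (ii), (iii), (iv) implies (i). The forward implications are immediate: the operator version of \eqref{general11} displayed at the start of this section gives
\[
A\# B \le \frac{A\#_s B + A\#_{1-s} B}{2} \le \alpha^2\frac{A+B}{2} + (1-\alpha^2)A\# B \le \frac{A+B}{2},
\]
so applying the operator monotonicity of $f$ to consecutive terms yields (ii), (iii) and (iv) at once. The work is therefore entirely in the three converses.

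For (ii) $\Rightarrow$ (i) I would observe that the matrix Heinz mean is the symmetric operator mean whose representing function is $\tfrac{t^s+t^{1-s}}{2}$, which by the scalar AM--GM inequality strictly dominates $\sqrt{t}$ for $t\neq 1$; hence $\#<\sigma$ pointwise and (ii) is exactly hypothesis (1) of Theorem \ref{Prop14}. For (iv) $\Rightarrow$ (i), the Heron mean $\alpha^2\nabla+(1-\alpha^2)\#$ is a convex combination of symmetric means, hence symmetric, and is strictly below $\nabla$ whenever $s\neq 0,1$; condition (iv) then reads $f(A\nabla B)\ge f(A\sigma B)$, which is precisely characterization (A) of Hiai--Ando. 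Thus both of these converses reduce to results already available.

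The essential new content is the converse (iii) $\Rightarrow$ (i), which is question (D) for the pair (Heinz, Heron) and should parallel the scalar argument of Theorem \ref{Prop4}. As in Lemma \ref{lemma12} and Theorem \ref{Prop13}, it suffices to show that for every $0<X\le Y$ there exist positive $A,B$ with $\tfrac{A\#_s B+A\#_{1-s}B}{2}=X$ and $\alpha^2\tfrac{A+B}{2}+(1-\alpha^2)A\# B=Y$, since (iii) then gives $f(X)\le f(Y)$. The key device is to parametrize $A,B$ through their geometric mean: writing $G=A\# B$ and $A=G^{1/2}e^{-L}G^{1/2}$, $B=G^{1/2}e^{L}G^{1/2}$ for a self-adjoint $L$, congruence-invariance of the means gives
\[
\frac{A\#_s B+A\#_{1-s}B}{2}=G^{1/2}\cosh(\alpha L)G^{1/2},\qquad \alpha^2\frac{A+B}{2}+(1-\alpha^2)A\# B=G^{1/2}\bigl(\alpha^2\cosh(L)+(1-\alpha^2)I\bigr)G^{1/2}.
\]
This is the exact matrix lift of the scalar function $f_\alpha(c)=\cosh(\alpha c)/(\alpha^2\cosh c+(1-\alpha^2))$ from Theorem \ref{Prop4}: the eigenvalues of $XY^{-1}$ equal $f_\alpha$ evaluated at the eigenvalues of $L$ and therefore lie in $(0,1]$, so the bijectivity of $f_\alpha$ already established in Theorem \ref{Prop4} pins down the spectrum of $L$, after which one recovers $G$. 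To handle multiplicities and non-uniqueness I would invoke the relaxed, surjectivity-only construction of Lemma \ref{lemma13}, and if the inversion is only directly available when $Y$ is close to $X$, I would chain the resulting estimates along the spectral staircase built from $X^{-1/2}YX^{-1/2}$ exactly as in Theorem \ref{Prop13}.

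The hard part is this realization step. Unlike Theorem \ref{Prop13}, neither mean here is the geometric mean, so fixing one of the two means does not collapse $A,B$ onto a one-parameter family; the parametrization above leaves two coupled matrix unknowns $G$ and $L$, and for non-commuting $X,Y$ the system has no evident closed-form solution. A further subtlety is that the linearization at $L=0$ is degenerate, since Heinz and Heron agree to first order (one checks $a'(1)=b'(1)=\alpha^2$ for $a(u)=\cosh(\alpha\,\mathrm{arccosh}\,u)$ and $b(u)=\alpha^2u+1-\alpha^2$); a naive inverse function theorem therefore fails at the diagonal $X=Y$, and the argument must either exploit the second-order structure of $b-a$ or route the entire construction through the surjectivity-plus-staircase scheme of Lemma \ref{lemma13} and Theorem \ref{Prop13}.
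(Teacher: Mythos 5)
Your forward implications and your reductions of (ii)$\Rightarrow$(i) (to Theorem \ref{Prop14}, since the Heinz mean is a symmetric Kubo--Ando mean whose representing function $\tfrac{t^s+t^{1-s}}{2}$ strictly dominates $\sqrt t$) and of (iv)$\Rightarrow$(i) (to Hiai--Ando, the Heron mean being symmetric) are sound; the latter is exactly what the paper does, and the former is a legitimate shortcut (the paper instead reruns the realization argument with $k_s(x)=2\sqrt x/(x^s+x^{1-s})$). The gap is in the one implication carrying the new content, (iii)$\Rightarrow$(i). You identify the correct scalar function $f_\alpha$ and a correct parametrization, but you then concede that the coupled system in $(G,L)$ has no evident solution for non-commuting $X,Y$, and the fallback you sketch (inverse function theorem near the diagonal plus the staircase of Theorem \ref{Prop13}) is one you yourself observe fails because the linearization at $L=0$ is degenerate. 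As written, the realization step is not established, so the proof does not close.

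The missing idea is a congruence normalization that decouples the two unknowns. First set $Y=I_n$: replace $X$ by $Y^{-1/2}XY^{-1/2}\le I_n$ and congruate the final solution back by $Y^{1/2}$, using congruence invariance of both means. Writing $C_0=A_0^{-1/2}B_0A_0^{-1/2}$, the Heron equation $A_0^{1/2}\bigl(\alpha^2\tfrac{I_n+C_0}{2}+(1-\alpha^2)C_0^{1/2}\bigr)A_0^{1/2}=I_n$ forces $A_0$ to be the inverse of a function of $C_0$, so in particular $A_0$ commutes with $C_0$; substituting into the Heinz equation collapses the system to the single functional-calculus equation
$$X=\Bigl(\tfrac{C_0^s+C_0^{1-s}}{2}\Bigr)\Bigl(\alpha^2\tfrac{I_n+C_0}{2}+(1-\alpha^2)C_0^{1/2}\Bigr)^{-1},$$
which is solvable for $C_0$ precisely because the associated scalar map is a bijection onto $(0,1]$ --- the computation you already did in Theorem \ref{Prop4}. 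Since that range is all of $(0,1]$, every $0<X\le I_n$ is reached in a single step: no staircase, no inverse function theorem, and the degeneracy at $L=0$ that worries you never enters. This normalization (or an equivalent device) is what your argument for (iii)$\Rightarrow$(i) is missing.
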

\begin{proof}
It is obvious that (i) implies (ii), (iii), and (iv). Let us show that (iii) implies (i) first and then we show (ii) implies (i). That would complete the proof since (iv) implies (i) follows from \cite[Proposition 4.1]{hiai-ando} both the Heron mean is symmetric.

Suppose (\ref{bh}) holds for any positive definite matrices $A$ and $B$. We need to show that for any $0<  X \le Y$, 
$$
f(X) \le f(Y).
$$
Firstly, let us consider the case when $Y= I_n$, where $I_n$ is the identity matrix or order $n$. We now show that there exist positive definite matrices $A_0, B_0$ such that 
\begin{equation}\label{111}
\frac{A_0\sharp_s B_0 + A_0\sharp_{1-s} B_0}{2} = A_0^{1/2} \left(\frac{C_0^s + C_0^{1-s}}{2}\right)A_0^{1/2} = X\quad \end{equation}
and
\begin{equation}\label{112}
A_0^{1/2}  \left(\alpha(s)^2 \frac{I_n+C_0}{2} + (1-\alpha(s)^2) C_0^{1/2}\right)A_0^{1/2}  = I_n,
\end{equation}
where $C_0 = A_0^{-1/2}B_0A_0^{-1/2}.$ 
From \eqref{112}, we get 
$$
A_0^{1/2} =  \left(\alpha(s)^2 \frac{I_n+C_0}{2} + (1-\alpha(s)^2) C_0^{1/2}\right)^{-1/2}.
$$
Substituting the last identity to \eqref{111}, we get
\begin{equation}\label{113}
X = \left(\frac{C_0^s + C_0^{1-s}}{2}\right).\left(\alpha(s)^2 \frac{I_n+C_0}{2} + (1-\alpha(s)^2) C_0^{1/2}\right)^{-1}
\end{equation}
From the proof of Theorem \ref{Prop4} the function 
$$f(x)=\left(\frac{x^s+x^{1-s}}{2}\right)\left(\alpha(s)^2\frac{1+x}{2}+(1-\alpha(s)^2)\sqrt{x}\right)^{-1}$$
is bijective and takes values in $(0, 1]$. Therefore, for any $0<X \le I_n$ there exists a unique matrix $C_0$ satisfying \eqref{113}.  Hence, the matrix $A_0$ is obtained from \eqref{112} and the matrix $B_0$ equals $A_0^{1/2}C_0A_0^{1/2}$. 

In general, for $0 <X \le Y$  we have $0 < Y^{-1/2}XY^{-1/2} \le I_n$. By the above arguments, we can find $A_0, B_0 \in \mathbb{M}_n^+$ such that  
\begin{equation*}
\frac{A_0\sharp_s B_0 + A_0\sharp_{1-s} B_0}{2} = Y^{-1/2}XY^{-1/2} \end{equation*}
and
\begin{equation*}
\alpha(s)^2 \frac{A_0+B_0}{2} + (1-\alpha(s)^2) A_0\sharp B_0  = I_n.
\end{equation*}
Consequently, applying (\ref{bh}) to matrices $A= Y^{1/2}A_0Y^{1/2}, B=Y^{1/2}B_0Y^{1/2}$ we obtain that $f(X) \le f(Y)$. Finally, by the continuity of $f$ we conclude that the function $f$ is operator monotone on $[0, \infty).$ 

To show that (ii) implies (i), following the same argument, it suffices to show that the function 
$k_s(x):(0,1]\to (0,1]$ defined by $$k_s(x)=\frac{2\sqrt{x}}{x^s+x^{1-s}}$$
is bijective. However, by realizing $k_s$ as a hyperbolic secant, this is obvious.
\end{proof}

\begin{remark} A couple of remarks are in order:
\begin{enumerate}
    \item In Theorem \ref{theorem 5}, $\alpha(s)^2$ can be replaced with $|\alpha(s)|$ and the same result holds.
    \item Since the Heinz mean is a symmetric mean, we have proved a partial answer for Question (C) for the case when $\sigma$ is the Heinz mean.
\end{enumerate}
\end{remark}

{\bf Acknowledgements}

Research of the first author is funded by Vietnam National Foundation for Science and Technology Development (NAFOSTED) under grant number 101.02-2017.310.


\end{document}